\theoremstyle{definition}
\newtheorem{definition}{Definition}[section]
\newtheorem{notation}{Notation}[section]
\theoremstyle{remark}
\newtheorem{example}[definition]{Example}
\newtheorem{remark}[definition]{Remark}
\theoremstyle{plain}
\newtheorem{lemma}[definition]{Lemma}
\newtheorem{proposition}[definition]{Proposition}
\newtheorem{theorem}[definition]{Theorem}
\newtheorem{corollary}[definition]{Corollary}
\newtheorem{conjecture}[definition]{Conjecture}
\newcommand{\bo}{\mathbf}
\newcommand{\IQ}{\mathbb{Q}}
\newcommand{\IP}{\mathbb{P}}
\renewcommand{\bold}{\textbf}
\renewcommand{\bar}{\overline}
\newcommand{\mo}{\bar{\bo{M}}_{0,n}}
\newcommand{\wo}{\bar{\bo{M}}_{0,\mathcal{A}}}
\newcommand{\A}{\mathcal{A}}
\newcommand{\plus}{\bigoplus}
\newcommand{\floor}{\lfloor{n/2}\rfloor}
\newcommand{\tensor}{\otimes}
\newcommand{\inject}{\hookrightarrow}
\newcommand{\intersect}{\cap}
\begin{document}

\title{On Log Canonical Models of the Moduli Space of Stable Pointed Curves.}
\author{Matthew Simpson}
\maketitle

We study the log canonical models of the moduli space $\mo$ of
pointed stable genus zero curves associated to the divisors of the
form $K_{\mo}+\alpha \text{D}$ where $D$ denotes the boundary of
$\mo$.  In particular we will show that, as a formal consequence of
a conjecture by Fulton regarding the ample cone of $\mo$, the log
canonical model associated to $K_{\mo}+\alpha \text{D}$ is equal to
the moduli space of weighted stable curves
with symmetric weights dependant on $\alpha$.

\section{Introduction}
\quad\\

Historically, an important problem related to the minimal model
program is to find the canonical model for $\bar{\bo{M}}_g$, the
moduli space of stable genus $g$ curves.  Here, the canonical model
(if it exists) is the projective scheme associated to the graded
ring (called the canonical ring) of sections $\plus_{n\geq
0}\Gamma(nK_{\bar{\bo{M}}_g})$. $\bar{M}_g$ has been proven to be of
general type for $g\geq 22$ [EH,Fa,HaMu].  In particular in this
case the finite generation of the canonical ring implies that the
canonical model is birational to $\bar{M}_g$. Recent progress has
been made in this area. For example [BCHM] have proved the existence
of canonical models in the case of smooth projective varieties of
general type.

 A somewhat easier problem is to study the \emph{log} canonical model associated to the graded ring {
$\plus_{n\geq 0}\Gamma(n(K_{\bar{\bo{M}}_g}+\alpha {D}))$
\normalsize}where $D$ is the boundary component parameterizing
singular curves and $\alpha$ is a rational number. This ring is
known to be finitely generated for $7/10\leq \alpha\leq
1$ and the models have been explicitly constructed using geometric invariant theory for certain values of $\alpha$ [CoHa,HH].\\

We play a similar game with the related but simpler space, the
moduli space of n-pointed stable genus zero curves $\mo$.  As usual
let $D$ denote the boundary component (a normal crossing divisor) of
$\mo$. We are interested in the finite generation of the ring
$R=\plus_{n\geq 0}\Gamma(n(K_{\mo}+\alpha \text{D}))$ of sections
associated to the divisor $K_{\mo}+\alpha D$. By Kawamata's
basepoint freeness theorem (below), the associated sheaf to $n(K_{\mo}+\alpha
\text{D})$ is generated by its global sections, for divisible
$n>>0$ and $0<\alpha<1$, if $K_{\mo}+\alpha D$ is nef and big. Thus
$R$ is finitely generated in such cases.  For small $\alpha$,
however, $K_{\mo}+\alpha D$ is not nef. To get around this we will
try to find nef and big divisors which have the same ring of
sections as our log canonical rings. Moreover we will try to do this
in such a way that we get an explicit description of our log
canonical models
in addition to existence.\\

In the next two sections we will relate the section rings of
$K_{\mo}+\alpha D$ to certain divisors on the moduli spaces of
weighted stable pointed curves.  In particular positivity of these
new divisors will imply that the weighted stable curve space is our
log canonical model.  In section $4$ we will state the main theorem and show that Fulton's
conjecture implies the positivity results which we need and section
$5$ will then survey the cases in which the result can be proved
directly without Fulton's conjecture. Finally, in section $6$, we
examine the neighborhood of the $\mathfrak{s}_n$-equivariant ample cone on
$\mo$ in which the divisors we are interested in live and show that
it (conjecturally) has a very simple structure.\\

\bold{Acknowledgments:} "This material is based upon work supported by the National Science
Foundation
under VIGRE Grant No. 0240058."

\section{Preliminaries}
We first recall some general facts.

\begin{theorem}[Kawamata basepoint freeness]([KM], Theorem 3.3)\\
Let $(X,D)$ be a proper pair with klt singularities such that $D$ is effective. Suppose A is a nef cartier
divisor such that $A-\epsilon(K_X+D)$ is nef and big for some
$\epsilon>0.$  Then $|nA|$ is basepoint free for $n>>0$.
\end{theorem}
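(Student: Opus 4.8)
\noindent\emph{Sketch of the intended proof (plan).}
This is Kawamata's theorem, so I will only indicate the shape of the argument --- the ``$X$-method'' --- which is carried out in detail in [KM, \S3.1--3.3]; I will freely use two results established there beforehand, namely the Kawamata--Viehweg (Nadel) vanishing theorem and the non-vanishing theorem, the latter guaranteeing that $|mA|\neq\emptyset$ for $m\gg 0$. The plan begins with a reduction to a pointwise statement. The base loci $\mathrm{Bs}\,|mA|$ decrease along divisibility, hence stabilize: $B:=\bigcap_{m\ge 1}\mathrm{Bs}\,|mA|=\mathrm{Bs}\,|m_0!\,A|$ for $m_0\gg 0$, and it is enough to show $B=\emptyset$. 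For this it suffices to find, for each $x\in X$, an integer $m=m(x)$ with $x\notin\mathrm{Bs}\,|mA|$ and $x\notin\mathrm{Bs}\,|(m+1)A|$: products of sections of $mA$ and $(m+1)A$ then place $x$ outside $\mathrm{Bs}\,|kA|$ for every $k\ge m^2$, and taking $k=m_0!\ge m(x)^2$ forces $x\notin B$.

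So I would fix $x\in X$; the task is to produce a section of $nA$, for a single large $n$, not vanishing at $x$. The technical heart is the construction of an effective $\mathbb{Q}$-divisor $D_0$ on $X$ with (i) $D_0\sim_{\mathbb{Q}}\lambda\bigl(A-\epsilon(K_X+D)\bigr)$ for some rational $0<\lambda<1/\epsilon$, and (ii) $(X,D+D_0)$ log canonical but not klt, with non-klt locus equal to the single reduced point $\{x\}$. One obtains the underlying divisor of (i) by applying Kodaira's lemma to the nef and big class $A-\epsilon(K_X+D)$ to exhibit effective $\mathbb{Q}$-divisors $\mathbb{Q}$-linearly equivalent to a rational multiple of it with arbitrarily large prescribed multiplicity at $x$ --- this is the one place where bigness of $A-\epsilon(K_X+D)$, rather than of $A$ itself, is used essentially. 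Rescaling by the log canonical threshold at $x$, which the large multiplicity forces to be small (so that $\lambda<1/\epsilon$ can be arranged), yields a $D_0$ satisfying (i) with $x$ in its non-klt locus; since $(X,D)$ is klt there is room to perform this rescaling. A further small perturbation of $D_0$ by effective divisors (again supplied by bigness and by non-vanishing) --- Kawamata's ``tie-breaking'' --- then cuts the non-klt locus down to exactly $\{x\}$, a reduced point, giving (ii).

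Granting such a $D_0$, let $\mathcal{J}=\mathcal{J}(X,D+D_0)\subseteq\mathcal{O}_X$ be the associated multiplier ideal; by (ii), $Z:=V(\mathcal{J})$ is the reduced point $x$. A short computation, writing $K_X+D$ in terms of the classes $A$ and $A-\epsilon(K_X+D)$ and using (i), shows that $nA-(K_X+D+D_0)$ is, for all $n\gg 0$, $\mathbb{Q}$-linearly equivalent to the sum of a nonnegative multiple of the nef divisor $A$ and a positive multiple of the nef and big divisor $A-\epsilon(K_X+D)$, hence is nef and big. Kawamata--Viehweg vanishing then gives $H^1\bigl(X,\mathcal{J}\otimes\mathcal{O}_X(nA)\bigr)=0$, so the restriction $H^0\bigl(X,\mathcal{O}_X(nA)\bigr)\to H^0\bigl(Z,\mathcal{O}_X(nA)|_Z\bigr)\cong k(x)$ is surjective, and any section mapping to a generator does not vanish at $x$. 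Running this construction at $x$ for $n$ and then for $n+1$ produces the integer $m(x)$ required above, and the reduction completes the proof.

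The step I expect to be the main obstacle is the construction of $D_0$ in the second paragraph: manufacturing an effective $\mathbb{Q}$-divisor, controlled in the class $A-\epsilon(K_X+D)$, whose log canonical singularities lie over exactly one prescribed point. Creating a non-klt point at $x$ is essentially a multiplicity count powered by the bigness hypothesis, but shrinking the non-klt locus to a single \emph{reduced} point --- the tie-breaking --- is delicate and is the real content of [KM, \S3].
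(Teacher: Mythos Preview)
The paper does not prove this theorem at all: it is quoted verbatim from [KM], Theorem~3.3, and used only as a black box to conclude that certain section rings are finitely generated. So there is no ``paper's own proof'' to compare against; your sketch is being measured against the standard argument in Koll\'ar--Mori.

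As a sketch of the $X$-method your outline is broadly faithful, but one step is overstated. You claim that tie-breaking cuts the non-klt locus of $(X,D+D_0)$ down to the single reduced point $\{x\}$. In general it does not: tie-breaking only arranges that there is a \emph{unique minimal} log canonical center $Z$ through $x$ (irreducible, and appearing with the right coefficient on a log resolution), but $Z$ may well be positive-dimensional. The argument in [KM, \S3] then restricts to $Z$, verifies that the induced pair on $Z$ is again klt with the appropriate divisor nef and big, and invokes the non-vanishing theorem (equivalently, induction on $\dim Z$) to produce sections on $Z$; Kawamata--Viehweg vanishing lifts these to $X$. Your use of vanishing and surjectivity of restriction is correct, but the target $H^0(Z,\mathcal{O}_X(nA)|_Z)$ need not be one-dimensional, and showing it is nonzero is exactly where the non-vanishing theorem enters a second time, not merely in the preliminary step $|mA|\neq\emptyset$. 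If you rewrite the middle paragraph with $Z$ an irreducible lc center rather than a point, and insert the inductive appeal to non-vanishing on $Z$, the sketch is complete.
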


In particular we may have $A=K_X+D$.\\

\begin{lemma}\label{one}
\text{Let $f: X\to Y$ be a proper morphism such}\text{ that $f_*
O_X=O_Y$}\\ \text{and let $L$ be a locally free sheaf on $Y$.}
\text{Then we have an isomorphism of global }\\\text{sections
$\Gamma(X,f^* L)=\Gamma(Y,L).$}

\end{lemma}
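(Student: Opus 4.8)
The plan is to reduce everything to the projection formula together with the hypothesis $f_* O_X = O_Y$; the only ingredient that is not pure formalism is the projection formula, so I would organize the argument around it.

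First I would record the elementary fact that global sections commute with pushforward: for any sheaf $\mathcal{G}$ on $X$ one has, straight from the definition of $f_*$, that $\Gamma(Y, f_*\mathcal{G}) = (f_*\mathcal{G})(Y) = \mathcal{G}(f^{-1}(Y)) = \Gamma(X,\mathcal{G})$ (no Leray spectral sequence is needed). Applying this with $\mathcal{G} = f^* L$ gives $\Gamma(X, f^* L) = \Gamma(Y, f_*(f^* L))$. Next I would invoke the projection formula: since $L$ is locally free of finite rank, the natural map $f_*\mathcal{F} \otimes_{O_Y} L \to f_*(\mathcal{F} \otimes_{O_X} f^* L)$ is an isomorphism for every sheaf $\mathcal{F}$ on $X$ — the statement is local on $Y$, and over an open set where $L$ is free of rank $r$ it reduces to the trivial identity $f_*(\mathcal{F}^{\oplus r}) \cong (f_*\mathcal{F})^{\oplus r}$. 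Taking $\mathcal{F} = O_X$ yields $f_*(f^* L) \cong f_* O_X \otimes_{O_Y} L$, and then the hypothesis $f_* O_X = O_Y$ collapses the right-hand side to $O_Y \otimes_{O_Y} L = L$. Chaining the two isomorphisms gives $\Gamma(X, f^* L) \cong \Gamma(Y, L)$; I would also note in passing that the resulting map is the natural one, i.e. the one induced by restriction along the adjunction unit $L \to f_* f^* L$, which is automatic from the construction.

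The main obstacle, such as it is, is simply applying the projection formula in the right generality: it holds unconditionally precisely because $L$ is locally free of finite rank, which is exactly the hypothesis we are handed, and properness of $f$ is not actually needed for this step (it is present but unused here). If $L$ were merely coherent one would have to bring in flatness or cohomology-and-base-change inputs, but that complication does not arise.
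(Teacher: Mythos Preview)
Your proof is correct and follows exactly the same route as the paper: pass from $\Gamma(X,f^*L)$ to $\Gamma(Y,f_*f^*L)$, apply the projection formula to get $\Gamma(Y,f_*O_X\otimes L)$, and use the hypothesis $f_*O_X=O_Y$. Your added remarks (the justification of the projection formula via locality and freeness, and the observation that properness is not actually used) are accurate elaborations but do not change the argument.
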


\begin{proof}
We have $\Gamma(X,f^*L)=\Gamma(Y,f_* f^* L)$ which, by the
projection formula, is isomorphic to $\Gamma(Y, f_* O_X\tensor L)$.
By assumption this is $\Gamma(Y,O_Y \tensor L)=\Gamma(Y, L)$.

\end{proof}

We will be interested in the following application.

\begin{proposition}\label{two}
Let $f:X\to Y$ be a birational morphism between normal projective varieties.  Let $D$ be a divisor on $Y$ and $F$ be an
effective divisor on $X$ whose support is contracted by $f$. Then
$\Gamma(X,f^*D+F)=\Gamma(Y,D)$.

\end{proposition}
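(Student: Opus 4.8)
The plan is to deduce this from Lemma \ref{one} applied to the morphism $f: X \to Y$. The key point is that a birational morphism between normal projective varieties satisfies $f_* O_X = O_Y$: indeed, $f_* O_X$ is a coherent sheaf of $O_Y$-algebras, and since $f$ is birational it agrees with $O_Y$ on a dense open set; normality of $Y$ together with the fact that $f$ is proper (hence $f_* O_X$ is finite over $O_Y$) forces $f_* O_X = O_Y$ by Zariski's main theorem. So Lemma \ref{one} gives $\Gamma(X, f^* D) = \Gamma(Y, D)$ whenever $D$ is Cartier; for Weil divisors one works with the reflexive hull / the rank-one reflexive sheaf $O_Y(D)$ and notes the projection-formula argument still goes through on the normal variety $Y$.

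The remaining task is to absorb the effective exceptional divisor $F$: I want to show $\Gamma(X, f^* D + F) = \Gamma(X, f^* D)$. One inclusion is trivial since $F \geq 0$ gives $\Gamma(X, f^* D) \subseteq \Gamma(X, f^* D + F)$. For the reverse inclusion, take a global section $s$ of $O_X(f^* D + F)$; I must show $s$ actually has no poles along $\mathrm{Supp}(F)$, i.e. $s \in \Gamma(X, f^* D)$. The idea is that the poles of $s$ are concentrated on the $f$-exceptional locus, which has codimension $\geq 2$ in $Y$; pushing forward, $f_* s$ is a rational section of $O_Y(D)$ that is regular away from a codimension-$\geq 2$ set, hence regular everywhere by normality of $Y$ (Hartogs / the fact that $O_Y(D)$ is reflexive). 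Then $f^*(f_* s)$ and $s$ agree as rational sections, and $f^*(f_* s)$ lies in $\Gamma(X, f^* D)$, so $s$ does too.

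Concretely, the cleanest way to package the last paragraph: since $f_* O_X = O_Y$ and $F$ is $f$-exceptional and effective, we have $f_* O_X(F) = O_Y$ as well — a local section of $O_X(F)$ over $f^{-1}(U)$ is a rational function regular outside a codimension-$\geq 2$ subset of the normal variety $U$, hence in $O_Y(U)$. Then by the projection formula $\Gamma(X, f^* D + F) = \Gamma(Y, f_*(O_X(F) \otimes f^* O_Y(D))) = \Gamma(Y, f_* O_X(F) \otimes O_Y(D)) = \Gamma(Y, O_Y(D)) = \Gamma(Y, D)$, where the reflexivity of $O_Y(D)$ on the normal variety $Y$ is what lets the projection formula and the identification $f_* O_X(F) \otimes O_Y(D) = O_Y(D)$ survive passing to the rank-one non-locally-free setting.

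I expect the main obstacle to be the technical bookkeeping when $D$ is merely a Weil divisor rather than Cartier: the projection formula as used in Lemma \ref{one} is stated for locally free $L$, so I either need to restrict to an open subset of $Y$ where $D$ is Cartier (the complement of a codimension-$\geq 2$ set) and then invoke normality to extend sections, or argue directly with reflexive sheaves. Either route is routine but is the one place where one must be careful; everything else is a formal consequence of $f_* O_X = O_Y$ and the codimension of the exceptional locus.
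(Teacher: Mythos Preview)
Your proposal is correct and rests on the same three ingredients as the paper's proof: Lemma \ref{one} (equivalently $f_* O_X = O_Y$), the fact that the image of the $f$-exceptional locus has codimension $\geq 2$ in $Y$, and Hartogs/normality to extend sections across that locus. The packaging differs slightly. You establish the sheaf-level identity $f_* O_X(F) = O_Y$ and then invoke the projection formula once more. The paper instead restricts everything to the open set $U \subset Y$ over which $f$ is an isomorphism, writes down the chain of injections
\[
\Gamma(X,f^*D) \hookrightarrow \Gamma(X,f^*D+F) \hookrightarrow \Gamma\bigl(U,(f^*D+F)|_U\bigr) = \Gamma(U,D|_U) = \Gamma(Y,D) = \Gamma(X,f^*D),
\]
and concludes by finite-dimensionality that each injection is an isomorphism. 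Your route sidesteps the finite-dimensionality trick and is a hair more general in spirit; the paper's route sidesteps exactly the technicality you flag (projection formula when $O_Y(D)$ is only reflexive rather than locally free), since restriction to $U$ and extension by normality never require $D$ to be Cartier.
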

Note: Throughout the paper $f^*D$ will be a shorthand for $f^* O_X(D)$.

\begin{proof}
By the lemma we have $\Gamma(Y,D)=\Gamma(X,f^* D)$.  Let $U$ be the
set on which $f$ is an isomorphism. We note that the
$codim(Y,Y/U)\geq 2.$  In particular this implies that sections of
$D$ over $U$ lift to sections of $D$ over $Y$, so
$\Gamma(Y,D)=\Gamma(U,D_{|U})$. Putting this all together we get a
sequence of morphisms
$$\Gamma(X,f^*D)\inject \Gamma(X,f^*D+F)\inject
\Gamma(U,(f^*D+F)_{|U}))$$$$=\Gamma(U,f^*
D_{|U})=\Gamma(U,D_{|U})=\Gamma(Y,D)=\Gamma(X,f^* D).$$

The first isomorphism is from the fact that F is supported away from
U and the injections are the natural ones gotten by inclusion and
restriction.  Finally, these are all finite dimensional vector
spaces so the fact that the total string of injective maps makes an
isomorphisms implies that each individual map is an isomorphism.

\end{proof}

Finally we note that if $D$ is ample in the above situation then
proposition \ref{two} implies that  $$Proj \plus_{n\geq
0}\Gamma(X,n(f^*D+F))=Proj\plus_{n\geq 0}\Gamma(Y,nD)=Y$$

\section{Weighted Stable Curves}
$K_{\mo}+\alpha D$ can be written in a nice way in terms of the boundary divisors $D_j$, $2\leq j\leq\floor$, parameterizing nodal curves with a node separating $j$ marked points
from the others.  In particular we have:
\begin{lemma}([Pan], Proposition 1)
$$K_{\mo}=\sum_{j=2}^{\floor}D_j\left(\frac{(j-2)(n-1)-j(j-1)}{n-1}\right), D=\sum_{j=2}^{\floor} D_j.$$
\end{lemma}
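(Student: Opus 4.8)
The plan is to reduce the assertion to two standard facts about $\mo$ and then carry out a combinatorial bookkeeping. First I would use the genus-zero specialization of Mumford's formula for the canonical class: writing $\psi_i$ for the $i$-th cotangent line class and $D$ for the total boundary, one has $K_{\mo}=\sum_{i=1}^{n}\psi_i-2D$, the $13\lambda$ term of the general $\bar{\bo{M}}_{g,n}$ formula dropping out because the Hodge class vanishes in genus zero. This can be sanity-checked on $\bar{\bo{M}}_{0,4}\cong\IP^1$, where it reads $4[\mathrm{pt}]-6[\mathrm{pt}]=-2[\mathrm{pt}]$, and on $\bar{\bo{M}}_{0,5}$, where it recovers the anticanonical class $3H-E_1-\cdots-E_4$ of the degree-five del Pezzo surface. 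If one prefers to avoid citing it, the same identity follows directly from analyzing the universal curve $\bar{\bo{M}}_{0,n+1}\to\mo$ together with a point-forgetting morphism $\mo\to\bar{\bo{M}}_{0,n-1}$.

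The second ingredient is the standard genus-zero comparison between $\psi$-classes and boundary divisors: for any three distinct marked points $i,b,c$, one has $\psi_i=\sum_{S\ni i,\;b,c\notin S}\delta_{0,S}$, where $\delta_{0,S}$ denotes the boundary divisor attached to the partition $S\sqcup S^{c}$ (so $\delta_{0,S}=\delta_{0,S^{c}}$) and the sum runs over boundary indices. Since the right-hand side is independent of the chosen pair $(b,c)$, I would sum this identity over all $(n-1)(n-2)$ ordered choices of $(b,c)$; this weights each $\delta_{0,S}$ by the number $(n-|S|)(n-|S|-1)$ of such pairs contained in $S^{c}$, giving
$$(n-1)(n-2)\,\psi_i=\sum_{S\ni i}(n-|S|)(n-|S|-1)\,\delta_{0,S}.$$
Summing over $i$ (each valid $S$ now being hit $|S|$ times) yields $(n-1)(n-2)\sum_i\psi_i=\sum_S |S|(n-|S|)(n-|S|-1)\,\delta_{0,S}$.

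It remains to regroup by complementation and by size. Pairing $S$ with $S^{c}$ and using $\delta_{0,S}=\delta_{0,S^{c}}$, a boundary divisor whose parts have sizes $j$ and $n-j$ accumulates total weight $j(n-j)(n-j-1)+(n-j)j(j-1)=j(n-j)(n-2)$; summing over all boundary divisors of that type gives $\sum_i\psi_i=\frac{1}{n-1}\sum_{j=2}^{\floor}j(n-j)\,D_j$ (the case $n$ even with $j=n/2$ comes out the same even though $S$ and $S^{c}$ then have equal size). Subtracting $2D=2\sum_{j}D_j$ then produces $K_{\mo}=\sum_{j=2}^{\floor}\bigl(\tfrac{j(n-j)}{n-1}-2\bigr)D_j$, and an elementary expansion of the numerator gives $\tfrac{j(n-j)}{n-1}-2=\tfrac{j(n-j)-2(n-1)}{n-1}=\tfrac{(j-2)(n-1)-j(j-1)}{n-1}$, which is the claimed coefficient; the remaining identity $D=\sum_j D_j$ is simply the definition of the total boundary.

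The arithmetic above is routine, so the real content lies in the two geometric inputs, and I expect the $\psi$–boundary relation to be the only genuinely nontrivial step: it is obtained by pulling back the linear equivalence among the three boundary points of $\bar{\bo{M}}_{0,4}$ along a suitable forgetful morphism and using that $\psi_i$ restricts compatibly. One should also be mildly careful about the double counting in the definition of $D_j$ when $n$ is even, and about excluding the singleton ``set'' $S=\{i\}$ (which is not a boundary divisor) when rewriting the summations. A fully self-contained alternative would be to run the same bookkeeping through Kapranov's realization of $\mo$ as an iterated blow-up of $\IP^{n-3}$ along strict transforms of the linear spans of subsets of $n-1$ general points, tracking $K$ and the exceptional divisors through each blow-up; that is more elementary but substantially longer, so I would use it only if the standard facts above were not at hand.
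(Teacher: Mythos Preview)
Your argument is correct. The two inputs you invoke---the genus-zero specialization $K_{\mo}=\sum_i\psi_i-2D$ of the Mumford formula and the boundary expression $\psi_i=\sum_{i\in S,\;b,c\notin S}\delta_{0,S}$---are standard, and your averaging-over-$(b,c)$ trick together with the complementation bookkeeping cleanly produces $\sum_i\psi_i=\frac{1}{n-1}\sum_j j(n-j)D_j$; the final algebraic identity $\frac{j(n-j)}{n-1}-2=\frac{(j-2)(n-1)-j(j-1)}{n-1}$ is immediate. Your caution about the $j=n/2$ double-count and about excluding $S=\{i\}$ is well placed and handled correctly.

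There is nothing to compare against in the paper itself: the lemma is simply quoted from Pandharipande \textnormal{[Pan]} without proof. For what it is worth, Pandharipande's original argument also passes through $\psi$-classes and the genus-zero boundary relations, so your route is in the same spirit as the cited source. Your suggested alternative via Kapranov's iterated blow-up description would indeed work but, as you say, is considerably more laborious; the $\psi$-class approach is the right one here.
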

This implies that $K_{\mo}+\alpha D$ is effective (big) when
$\alpha\geq\frac{2}{n-1}(>)$.  [KeM], Lemma 4.8 implies that the inequality for bigness is tight.  Moreover since D is a normal crossing
divisor on a smooth variety this implies by definition of log
canonical/terminal that $K_{\mo}+\alpha D$ is log canonical
(terminal) for $\alpha\leq 1(<1).$ From now on we will assume
$\alpha$ is a rational number in the interval of interest,
$$\frac{2}{n-1}<\alpha\leq 1.$$

\begin{definition}
For a given $n$ and a given number $\alpha\in (\frac{2}{n-1},1] $,
let $k$ be the largest integer in the range $1,\dots,\lfloor
n/2\rfloor$ such that $\alpha\leq2/(k+1)$.  We define the following
$\mathfrak{s}_n$-equivariant divisor on $\mo$:
$$A_{\alpha}:=K_{\mo}+\sum_{j=2}^{k}D_j\left( \genfrac(){0cm}{0}j 2 \alpha
-(j-2)\right)+\sum_{j=k+1}^{\lfloor n/2\rfloor}\alpha D_j.$$
\end{definition}
These will be the divisors we are interested in.  As we will see shortly, they are pullbacks of nice log canonical divisors on certain related moduli spaces.  It should also be noted that it makes since to allow $\alpha=\frac{2}{n-1}$ but it is uninteresting as $A_{\frac{2}{n-1}}=0$.\\

Let $\wo$ be the moduli space of weighted stable pointed curves
associated to $n$ points with symmetric weight $\A=\{a,a,\ldots,a\}$
with $\frac{1}{k+1}<a\leq \frac{1}{k}.$ This space is smooth and
projective. In his introduction to these spaces, Hassett [Ha] showed
that there is a natural birational morphism $\rho:\mo\rightarrow
\wo$. In particular this map, on the level of pointed stable genus
zero curves, acts on curves by contracting components which become
unstable with the new weights. i.e. if a component has $b$ nodes and
$c$ marked points then it will be contracted unless $ac+b>2$.  Our
map $\rho$ is an isomorphism away from the boundary $D$ and
contracts precisely the boundary components $D_i$ with $i\neq 2$ and
$i a \leq 1$.\\

From the properties of the map $\rho$ described in [Ha], section 4,
we have the following:
\begin{lemma}

The map $\rho$ factors into a sequence of blowups along smooth
centers
$$\rho:\mo\overset{B_3}{\rightarrow} {\bar{\bo{M}}}_{0,A(3)}
\overset{B_4}{\rightarrow} \bar{\bo{M}}_{0,A(4)}\rightarrow \dots
\overset{B_k}{\rightarrow} \bar{\bo{M}}_{0,A(k )}=\wo$$ where map
$B_i$ blows down the image of $D_i$ under the map $B_{i-1}\circ
B_{i-2}\circ\dots\circ B_{3}$. If $k<3$, $\rho$ is an isomorphism.
Here $A(i)=\{a,a,\dots,a\}$ with $\frac{1}{i+1}<a\leq \frac{1}{i}.$

\end{lemma}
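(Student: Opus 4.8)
The plan is to read off everything we need from Hassett's construction in [Ha], Section 4, where the reduction morphisms between moduli spaces of weighted stable curves are analyzed. The key structural input is that decreasing the weights from $A(i-1)=\{a',\dots,a'\}$ with $\frac{1}{i}<a'\le\frac{1}{i-1}$ to $A(i)=\{a,\dots,a\}$ with $\frac{1}{i+1}<a\le\frac{1}{i}$ causes exactly those components to become unstable which carry $c$ marked points and $b$ nodes with $ac+b\le 2$ but $a'c+b>2$; since all weights pass through the threshold $1/i$, these are precisely the "tails" carrying exactly $i$ marked points and one node. Contracting such a tail replaces the configuration by a point of multiplicity $i$ on the remaining component, and the locus of curves in $\bar{\bo{M}}_{0,A(i-1)}$ carrying such a tail is exactly the image of the boundary divisor $D_i$ under the composite $B_{i-1}\circ\cdots\circ B_3$. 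This gives the factorization as a composite of birational contractions $B_i$, each contracting one divisor.

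The next step is to argue each $B_i$ is a smooth blowdown, i.e. the inverse of a blowup along a smooth center. Here I would again cite [Ha], Section 4 (or Proposition 4.5 there), which identifies the center of $B_i$ with a union over subsets $S$ of size $i$ of the boundary strata $\bar{\bo{M}}_{0,S\cup\{\star\}}\times \bar{\bo{M}}_{0,S^c\cup\{\star\}}$ appropriately interpreted in $\bar{\bo{M}}_{0,A(i)}$ — these are products of (weighted) moduli spaces of stable curves, hence smooth and projective, and they sit in $\bar{\bo{M}}_{0,A(i)}$ as a smooth (possibly disconnected) closed subvariety of the expected codimension, with $D_i$ (or its image) the exceptional divisor of the blowup. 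One should check that the order of contractions is forced: $D_i$ can only be contracted once all $D_j$ with $j>i$ have been contracted, because a curve carrying a tail with $i$ marked points may also carry nested tails with more marked points, and these must be removed first for the relevant stratum to be smooth of the right codimension; performing the $B_j$ in decreasing order of $j$ (equivalently increasing $a$, i.e. increasing $i$ as written) resolves this. The base case $k<3$ is immediate: if $k\le 2$ then $a>1/3$, so $\rho$ contracts only boundary divisors $D_i$ with $ia\le 1$ and $i\ne 2$, of which there are none in the range $3\le i\le \lfloor n/2\rfloor$, hence $\rho$ is an isomorphism.

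The main obstacle I anticipate is not any single deep fact but the bookkeeping needed to verify that at the $i$-th stage the center being blown down really is smooth and that the contractions genuinely commute with the claimed ordering — i.e. that after performing $B_{k}, B_{k-1},\dots,B_{i+1}$ the image of $D_i$ is a smooth divisor mapping to a smooth center. This is where one leans most heavily on the detailed local analysis in [Ha]: the local picture of a weighted stable curve near a point where a length-$i$ tail is about to sprout is a versal deformation of an $i$-fold point, which is smooth, and the reduction morphism in these coordinates is the standard blowup. I would present this by first stating the local normal form from [Ha], then globalizing via the boundary stratification of $\mo$, and finally noting that the exceptional divisor of $B_i$ is by construction the proper transform of $D_i$, so that each $B_i$ blows down exactly that divisor, as claimed. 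The compatibility of the symmetric-weight data $A(i)=\{a,\dots,a\}$ with the chamber structure $\frac{1}{i+1}<a\le\frac{1}{i}$ ensures that no other divisors are contracted at step $i$, completing the identification of the factorization with the stated tower ending at $\bar{\bo{M}}_{0,A(k)}=\wo$.
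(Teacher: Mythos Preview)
The paper offers no proof of this lemma beyond the sentence that precedes it: the statement is simply asserted as a consequence of the properties of $\rho$ described in [Ha], Section 4. Your proposal to extract the factorization from Hassett's reduction morphisms is therefore exactly the paper's (implicit) approach, only spelled out in more detail than the paper bothers with.

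One point in your write-up is reversed and should be fixed. You claim that ``$D_i$ can only be contracted once all $D_j$ with $j>i$ have been contracted'' and propose ``performing the $B_j$ in decreasing order of $j$ (equivalently increasing $a$, i.e.\ increasing $i$ as written).'' This is internally inconsistent and opposite to the lemma: the factorization is $B_3, B_4, \ldots, B_k$ in \emph{increasing} order of the index, which corresponds to \emph{decreasing} the common weight $a$ through the thresholds $1/3, 1/4, \ldots, 1/k$ in turn. Moreover, a tail carrying $i$ marked points contains nested sub-tails with \emph{fewer} marked points (proper subsets of the $i$), not more; it is only after those smaller tails have already been contracted by $B_3,\ldots,B_{i-1}$ that the image of $D_i$ in $\bar{\bo{M}}_{0,A(i-1)}$ is a $\mathbb{P}^{i-2}$-bundle over the smooth diagonal locus in $\bar{\bo{M}}_{0,A(i)}$ where the $i$ points coincide. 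In the Hassett chamber picture there is no way to ``allow $k$ points to collide but not $3$,'' so the increasing order is forced. This is a bookkeeping slip rather than a gap in the strategy.
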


\begin{theorem}
Let $E$ denote the boundary of $\wo$.  Then $\rho^{*}(K_{\wo}+\alpha
E)=A_{\alpha}$ and $K_{\mo}+\alpha D-A_{\alpha}$ is effective and
$\rho$ -exceptional.
\end{theorem}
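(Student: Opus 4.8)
The plan is to compute both sides of the identity $\rho^*(K_{\wo}+\alpha E)=A_\alpha$ by pulling back through the factorization $\rho = B_k\circ\cdots\circ B_3$ provided by the previous lemma, and keeping careful track of discrepancies at each blowdown. First I would establish the canonical-bundle formula for a single step $B_i\colon \bar{\bo{M}}_{0,A(i-1)}\to \bar{\bo{M}}_{0,A(i)}$: since $B_i$ is the blowup of a smooth variety along the smooth center that is the image of $D_i$, we have $K_{\bar{\bo{M}}_{0,A(i-1)}} = B_i^*K_{\bar{\bo{M}}_{0,A(i)}} + c_i \widetilde{D}_i$ where $\widetilde{D}_i$ is the exceptional divisor (the proper transform of $D_i$ on $\mo$, pushed to the appropriate stage) and $c_i = \operatorname{codim}(\text{center}) - 1$. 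From Hassett's description of the center being blown up — the locus of curves with a "bubble" carrying $i$ of the marked points — one reads off that the relevant codimension is $i-1$, so $c_i = i-2$. This is exactly the coefficient appearing in the difference between $\alpha D_i$ and the $A_\alpha$-coefficient $\binom{i}{2}\alpha - (i-2)$ of $D_i$, which is the arithmetic heart of the matter.

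Next I would handle the boundary divisor $E$ on $\wo$. Under $\rho$, the components $D_j$ with $j\le k$ (and $j\ne 2$) are contracted, while the surviving boundary divisors of $\wo$ pull back to the corresponding $D_j$ for $j > k$ together with $D_2$ — Hassett's analysis in [Ha, §4] identifies which boundary strata persist. So $\rho^*E = \sum_{j=k+1}^{\lfloor n/2\rfloor} D_j + D_2 + (\text{a combination of the contracted } D_j\text{'s with multiplicities})$; the multiplicity with which a contracted $D_j$ appears is again determined by the blowup geometry at step $B_j$. Combining the canonical-bundle computation with this pullback-of-$E$ computation, the coefficient of each $D_j$ in $\rho^*(K_{\wo}+\alpha E)$ should collapse to: $\tfrac{(j-2)(n-1)-j(j-1)}{n-1}$ (the $K_{\mo}$-coefficient, from [Pan]) plus the extra $\binom{j}{2}\alpha-(j-2)$ for $2\le j\le k$, and plus $\alpha$ for $j>k$ — i.e. precisely $A_\alpha$. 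I would verify this coefficient-by-coefficient, which is a finite linear bookkeeping once the per-step discrepancy $c_i=i-2$ and the per-step $E$-multiplicity are in hand.

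For the second assertion, $K_{\mo}+\alpha D - A_\alpha = \sum_{j=3}^{k} D_j\big((j-2) - \binom{j}{2}\alpha + \alpha\big) = \sum_{j=3}^{k} D_j\big((j-2) - (\binom{j}{2}-1)\alpha\big)$. Each such $D_j$ with $j\le k$ is $\rho$-exceptional by the previous lemma, so it remains only to check the coefficients are $\ge 0$; this follows from the defining inequality $\alpha \le 2/(k+1) \le 2/(j+1)$ for $3\le j\le k$, since $(j-2) - (\binom{j}{2}-1)\tfrac{2}{j+1} = (j-2) - \tfrac{(j-2)(j+1)}{j+1} \cdot \tfrac{1}{1}$... one checks $(\binom{j}{2}-1)\tfrac{2}{j+1} = \tfrac{(j+2)(j-1)}{j+1} \le j-2$ fails naively, so I would instead simply observe $K_{\mo}+\alpha D - A_\alpha = \sum_{j=3}^{k}\big((j-2)-(\tbinom j2 -1)\alpha\big)D_j$ and verify nonnegativity of each coefficient directly from $\alpha\le 2/(k+1)$; effectivity and exceptionality then follow, and I note this also re-proves the first identity's consistency.

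The main obstacle I anticipate is pinning down the exact multiplicities — both the discrepancy $c_i$ and the multiplicity of each contracted $D_j$ in $\rho^*E$ — which requires a precise reading of the local structure of Hassett's reduction morphism near each blown-down stratum rather than any deep new idea; once those integers are correct, everything else is the linear-algebra reconciliation with Pandharipande's formula. A secondary subtlety is making sure the $\mathfrak{s}_n$-equivariance is respected throughout, i.e. that we may legitimately work with the symmetrized divisors $D_j$ rather than individual boundary components, but this is automatic since $\rho$, $K$, $D$, $E$, and $A_\alpha$ are all $\mathfrak{s}_n$-invariant.
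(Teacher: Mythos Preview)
Your outline is the same as the paper's: factor $\rho$ through the blowups $B_i$, read the discrepancy $c_i=i-2$ from the codimension $i-1$ of the center, track how the surviving boundary pulls back, and then check effectivity of the difference. Two points need tightening.

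First, the step you defer --- ``the multiplicity with which a contracted $D_j$ appears is again determined by the blowup geometry'' --- is the actual content of the proof, and you have not indicated how to extract those integers. What must be shown is that among the surviving boundary components on $\bar{\bo{M}}_{0,A(i)}$ only (the image of) $D_2$ contains the blowup center, and does so with multiplicity $\binom{i}{2}$; the $D_j$ with $j>k$ contribute nothing. The paper gets this by breaking symmetry into individual $\delta_S$'s and using Keel's criterion for when $\delta_S\cap\delta_T\neq\varnothing$: for $|S|=i$ and $|T|=j$ with $j=2$ or $j\ge i+1$, the only intersections not collapsed by $B_{i-1}\circ\cdots\circ B_3$ are those with $T\subset S$, forcing $j=2$, and there are $\binom{i}{2}$ such $T$. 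Without this argument (or an equivalent local computation) the coefficient bookkeeping you describe cannot be completed.

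Second, your effectivity check stumbles on a factoring slip: $\binom{j}{2}-1=\tfrac{j^2-j-2}{2}=\tfrac{(j-2)(j+1)}{2}$, not $\tfrac{(j+2)(j-1)}{2}$. With the correct factorization, $\bigl(\binom{j}{2}-1\bigr)\tfrac{2}{j+1}=j-2$, so $\alpha\le\tfrac{2}{j+1}$ gives $(j-2)-\bigl(\binom{j}{2}-1\bigr)\alpha\ge 0$ immediately, which is exactly the paper's computation.
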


\begin{proof}
Let $D_j^i$ be the image of $D_j$ under the map $B_{i-1}\circ
B_{i-2}\circ\dots\circ B_{3}$.\\
  $\rho$ contracts precisely $D_3
,\dots , D_k $ and is birational on $D_2,D_{k+1},\dots D_{\floor}$
so we have a discrepancy equation of the form
$$K_{\mo}=-{{\rho}_*}^{-1}(\alpha E)+\rho^{*}(K_{\wo}+\alpha
E)+\sum_{i=3}^k a_i D_i$$
$$= -\alpha(D_2+D_{k+1}+\dots
D_{\floor})+\rho^{*}(K_{\bar{\bo{M}}_{0,n,\alpha}}+\alpha
E)+\sum_{i=3}^k a_i D_i.$$

We can use our factorization of $\rho$ to compute the $a_i$ via the
discrepancy formula for a blow-up.  Suppose we've already done this
for $i+1\leq j \leq k$.  $M_{0,A(i-1)} \overset{B_i}{\rightarrow}
M_{0,A(i)}$ is the blow-up map along $D_i^i.$  Now we know that
$D_i$ is canonically isomorphic to a finite union of copies of
$\bar{M}_{0,i+1}\times\bar{M}_{0,n-i+1}$ and the image of $\rho$ is
birational to  a union of copies of $\bar{M}_{0,n-i+1}$.  In
particular, $D_i^i$ has codimension $(i+1)-3+1=i-1$.  Therefore,
calculating the discrepancy from the blow-up maps, we get
$$a_i=(i-1)-1-\sum_{j=2,i+1,i+2\dots \floor}{{\beta}_j}mult_{D_i^i}D_j^i$$ where ${\beta}_j$ equals $a_i$ for $i+1\leq j \leq k$ and $\alpha$ otherwise.  That is to say they are the parts of the discrepancy formula that we have already calculated.

We break symmetry in order to calculate the needed multiplicities.
Let $\delta_S$, where $S$ is a subset of $\{1,\dots,n\}$, represent
the standard divisors of pointed curves separating $S$ and $S^c$ by
a node.  Let $S=\{1,\dots, i\}$ and $T$ be a subset of
$\{1,\dots,n\}$ of order $j$.  We can calculate $mult_{D_i^i}D_j^i$
by computing multiplicities on the images of $\delta_S$ and the
${\delta_T}'s$.

By Keel's description of the intersection ring in [Ke], $\delta_S$
and $\delta_T$ intersect iff $S\subset S'$ or $S\subset T'$ or
$T\subset S'$ or $T\subset T'$. $mult_{\delta_S}\delta_T$ equals one
in cases in which the two divisors intersect.

In the cases that $S\subset T$ or $S\subset T^c$ the intersection
$\delta_S\intersect \delta_T$  will be contracted by our map
$B_{i-1}\circ B_{i-2}\circ\dots\circ B_{3}$.  The corresponding
multiplicity $mult_{\text{image}(\delta_S})\text{image} (\delta_T)$
becomes zero in this case.

The other possibility for intersection is $S^c\subset T$ or
$S^c\subset T^c.$  In either of these cases we are forced to have
less elements in $T$ than in $S$.  However the order $j$ of $T$ is
either $2$ or $\geq i+1$ by assumption.  Therefore we must have
$j=2.$  In this case the intersection does not get contracted, and
the multiplicity of the images of the two divisors is still one.
 To go back to the symmetric case we observe that  there are $\genfrac(){0cm}{0}j 2$ divisors $\delta_{\{a,b\}}$ where
$\{a,b\}\subset\{1,\dots,j\}.$ Thus the multiplicity of $\delta_S$
with $D_2$ is $\genfrac(){0cm}{0}j 2$. Since $S$ was arbitrary the
multiplicity of $D_{i}$ with $D_{2}$ is the same and as observed the
multiplicities of their images are also the same.  Now we have:

$$K_{\mo}=-\alpha(D_2+D_{k+1}+\dots D_{\floor})+\rho^{*}(K_{\bar{\bo{M}}_{0,n,\alpha}}+\alpha D)-\sum_{i=3}^k(\genfrac(){0cm}{0}j 2\alpha -(j-2)) D_i.$$
Rearranging the above equation gives us $A_{\alpha}$ as wished. Also
$K_{\mo}+\alpha D-A_{\alpha}=K_{\mo}+\alpha
D-(K_{\bo{M_{0,n}}}+\sum_{j=1}^{k}D_j\left(\text{$\genfrac(){0cm}{0}j 2$}
\alpha-(j-2)\right)+\sum_{j=k+1}^{\lfloor n/2\rfloor}\alpha
D_j=\sum_{i=3}^k ((j-2)+\alpha(1- \genfrac(){0cm}{0}j 2))$ which is
$\rho$-exceptional by construction and is effective since
$\alpha\leq \frac{2}{k+1}\leq\frac{2}{j+1}$, so that the
coefficients of our sum are greater than or equal to
$\frac{1}{j+1}\left((j-2)(j+1)+2-j(j-1)\right)=0.$
\end{proof}
\begin{corollary}\label{cor}
If $K_{\wo}+\alpha E$ is ample then $\wo$ is the log canonical model
of $\mo$ associated to the divisor $K_{\mo}+\alpha D.$  In
particular this will be true if $A_{\alpha}$ is nef and only
contracts $\rho$-exceptional curves.

\end{corollary}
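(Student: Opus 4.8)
The strategy is to combine the preceding Theorem with the machinery set up in Section 2. By the Theorem we have $\rho^*(K_{\wo}+\alpha E)=A_\alpha$ and $K_{\mo}+\alpha D - A_\alpha$ is effective and $\rho$-exceptional. Apply Proposition \ref{two} with $X=\mo$, $Y=\wo$, $f=\rho$, $D=K_{\wo}+\alpha E$, and $F=K_{\mo}+\alpha D-A_\alpha$: since $\rho$ is a birational morphism of normal projective varieties and $F$ is effective with support contracted by $\rho$, we get $\Gamma(\mo,\, n(K_{\mo}+\alpha D))=\Gamma(\mo,\, n(\rho^*(K_{\wo}+\alpha E)+F))=\Gamma(\wo,\, n(K_{\wo}+\alpha E))$ for every $n\geq 0$ (replacing $D$ by $nD$ and $F$ by $nF$ in the Proposition, which is harmless since all coefficients scale). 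Hence the two section rings are isomorphic as graded rings, so
\[
\operatorname{Proj}\bigoplus_{n\geq 0}\Gamma(\mo,\,n(K_{\mo}+\alpha D))=\operatorname{Proj}\bigoplus_{n\geq 0}\Gamma(\wo,\,n(K_{\wo}+\alpha E)).
\]

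Now if $K_{\wo}+\alpha E$ is ample, the right-hand side is simply $\wo$ by the remark following Proposition \ref{two} (the observation that $\operatorname{Proj}\bigoplus_n \Gamma(X,n(f^*D+F))=Y$ when $D$ is ample), and in particular the ring is finitely generated. Therefore $\wo$ is the log canonical model of $\mo$ associated to $K_{\mo}+\alpha D$. This proves the first assertion.

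For the second assertion, suppose instead only that $A_\alpha$ is nef and contracts exactly the $\rho$-exceptional curves. Since $A_\alpha=\rho^*(K_{\wo}+\alpha E)$ and $\rho$ is birational, the projection formula gives $(K_{\wo}+\alpha E)\cdot C = A_\alpha\cdot \widetilde C$ for a curve $C$ on $\wo$ and its strict transform $\widetilde C$; a curve on $\mo$ with zero intersection against $A_\alpha$ must be $\rho$-exceptional by hypothesis, so every curve on $\wo$ has strictly positive intersection with $K_{\wo}+\alpha E$, i.e.\ it is strictly nef. Moreover $K_{\wo}+\alpha E$ is big: $A_\alpha=\rho^*(K_{\wo}+\alpha E)$ and $K_{\mo}+\alpha D-A_\alpha$ is effective $\rho$-exceptional, so $K_{\wo}+\alpha E$ and $\rho_*(K_{\mo}+\alpha D)=K_{\wo}+D'$ differ by an effective class on $\wo$; since $K_{\mo}+\alpha D$ is big for $\alpha>2/(n-1)$ (noted in Section 3) its pushforward is big, hence $K_{\wo}+\alpha E$ is big. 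A strictly nef and big divisor on a projective variety which is moreover the pullback-descending class of a log canonical pair is semiample here because $\wo$ is a Hassett space with only log canonical (indeed the relevant mild) singularities and $K_{\wo}+\alpha E$ is of the form to which Kawamata basepoint freeness (Theorem 1.1, with $A=K_{\wo}+\alpha E$) applies; basepoint freeness plus strict positivity against all curves forces $K_{\wo}+\alpha E$ to be ample by the Nakai–Moishezon criterion (or: the semiample morphism it defines contracts no curve, hence is finite and birational, hence an isomorphism). Then the first part applies and $\wo$ is the log canonical model.

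The main obstacle is the second assertion: one must ascend from "$A_\alpha$ nef, contracting only $\rho$-exceptional curves" to "$K_{\wo}+\alpha E$ ample," which requires (a) checking that $\wo$ has singularities mild enough to invoke Kawamata basepoint freeness for the pair $(\wo,\alpha E)$ — this is where the smoothness of $\wo$ and the normal-crossing nature of $E$ in Hassett's construction must be used — and (b) upgrading strict nefness plus basepoint freeness to ampleness via Nakai–Moishezon on $\wo$. Step (a) is the genuinely delicate point; everything else is formal manipulation with Proposition \ref{two} and the projection formula.
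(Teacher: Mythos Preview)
Your argument is correct and is exactly the deduction the paper has set up: the first assertion is the immediate combination of the preceding Theorem with Proposition~\ref{two} and the remark following it, and the second assertion is the expected upgrade via Kawamata basepoint freeness (Theorem~2.1) on the smooth target $\wo$. The paper states the corollary without proof precisely because all the ingredients have been placed in advance.

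Two minor cleanups. For bigness of $K_{\wo}+\alpha E$ you do not need the pushforward discussion: you have already identified the graded section rings $\bigoplus_n \Gamma(\mo,n(K_{\mo}+\alpha D))\cong \bigoplus_n \Gamma(\wo,n(K_{\wo}+\alpha E))$, and since $\dim\mo=\dim\wo$ and $K_{\mo}+\alpha D$ is big, the common growth rate gives bigness of $K_{\wo}+\alpha E$ directly. For step~(a), the paper has already recorded that $\wo$ is smooth and projective, and Hassett shows its boundary $E$ is simple normal crossings; hence $(\wo,\alpha E)$ is klt for $\alpha<1$, so Theorem~2.1 applies without further work (the endpoint $\alpha=1$ corresponds to $\wo=\mo$, where ampleness is Theorem~\ref{neff}). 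With those simplifications your write-up is a faithful expansion of the paper's intended one-line corollary.
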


\bold{Note:}  Technically $\wo$ is only defined for $na>2$.
Therefore for the smallest values of $\alpha$ the above process was
not well defined. However, as mentioned in [Ha], we can define
$\wo$ for $a\leq 2/n$ to be ${\IP}^1/SL_2$ with symmetric weight $a$.
There still exists a map $\rho: \mo\to {\IP}^1/SL_2$ which contracts
all the boundary divisors except for $D_2$.  The above computation
plays out unchanged and the pullback result is still true.\\

\section{F-divisors}

\begin{definition}
Let $D$ be a divisor on $\mo$.  We say that $D$ is \emph{F-nef} (or
Fulton nef) if it intersects all vital curves on $\mo$
nonnegatively, where by a vital curve we mean an irreducible
component of the closed set in $\mo$ corresponding to pointed genus
zero stable curve with at least n-4 nodes.
\end{definition}

\begin{figure}[h]
  \caption{The generic element of a vital curve in ${\bar{\bo{M}}_{0,10}}$}
  \begin{center}
    \includegraphics{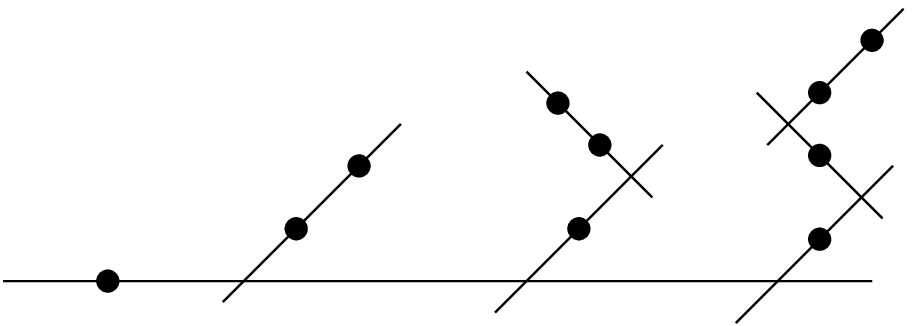}
  \end{center}
\end{figure}

We denote the closure of the real cone of effective curves up to
numerical equivalence by $\bar{NE}_1$.

 \begin{conjecture}{Fulton's Conjecture}\\

 $\bar{NE}_1$ is generated by vital curves.  In other words, a
 divisor is nef (ample) if and only if it intersects all vital curves
 nonnegatively (positively).

 \end{conjecture}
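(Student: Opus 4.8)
Since the vital curves are finite in number, the cone they generate in $N_1(\mo)$ is closed and polyhedral, and its dual is precisely the cone of F-nef divisors; thus the conjecture is equivalent to the assertion that every F-nef divisor on $\mo$ is nef (equivalently, that $\bar{NE}_1$ is generated by vital curves, which also yields the ``ample/positive'' reading by Kleiman's criterion). I would attack this form by induction on $n$. The base cases --- say $n\le 6$ --- are finite linear-algebra checks: using Keel's presentation of $\mathrm{Pic}(\mo)$ one writes an arbitrary divisor class in the boundary basis, imposes nonnegativity against the finitely many vital curves, and verifies that the resulting polyhedral cone coincides with the nef cone computed from the known birational contractions of $\mo$.

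For the inductive step, let $L$ be F-nef on $\mo$ and let $D_S\cong\bar{\bo{M}}_{0,|S|+1}\times\bar{\bo{M}}_{0,|S^c|+1}$ be a boundary divisor. Every one-dimensional stratum of this product is a one-dimensional stratum of $\mo$, i.e.\ a vital curve contained in $D_S$, so $L|_{D_S}$ is F-nef. Since $\bar{NE}_1$ of a product of these simply connected rational spaces is the product of the $\bar{NE}_1$'s and the vital curves of the product are pulled back from vital curves of the two factors, the F-conjecture is multiplicative; the induction hypothesis then gives that $L|_{D_S}$ is nef for every $S$, so $L$ is nef along the entire boundary $D=\sum_j D_j$. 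The conjecture is thereby reduced to the statement that \emph{a divisor which is F-nef and nef along $D$ is nef on all of $\mo$}.

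To complete this one must control irreducible curves meeting the interior $\mo\setminus D$. If $L\cdot C<0$ for some irreducible $C$, then necessarily $C\not\subseteq D$, and the idea is to exploit the abundance of morphisms out of $\mo$ --- the forgetful maps $\mo\to\bar{\bo{M}}_{0,n-1}$ and the reduction morphisms to Hassett's weighted spaces studied in Section~3 --- either to deform $C$ until it degenerates into the boundary (contradicting nefness there) or to show that $C$ spans a $K_{\mo}$-negative extremal ray whose contraction can be classified and is one of these ``obvious'' maps, so that $[C]$ is forced to be a positive multiple of a vital-curve class. This is in essence the reduction, due to Keel and McKernan, of Fulton's conjecture to the claim that the only extremal contractions of $\mo$ are the expected ones.

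The genuine obstacle is precisely this last step: there is no general device for moving an interior curve into the boundary, and $\mo$ is known to fail the Mori-dream-space property once $n$ is large, so one cannot merely enumerate its extremal contractions. I therefore expect a complete proof at this level of generality to be out of reach, and the realistic program --- which is what Sections~4--6 carry out in practice --- is instead (i) to pass to the $\mathfrak{s}_n$-invariant subcone, where the combinatorics of vital curves collapses dramatically and the symmetric statement, at least near the region of interest, becomes tractable, and (ii) to verify ``F-nef $\Rightarrow$ nef'' directly for the specific divisors $A_{\alpha}$ by testing them only against the vital-curve classes that actually arise, thereby sidestepping the full conjecture.
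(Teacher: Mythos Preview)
The statement you are attempting to prove is not a theorem in the paper --- it is labelled and treated as a \emph{conjecture}. The paper offers no proof of Fulton's conjecture; it only quotes the Farkas--Gibney result that the $\mathfrak{s}_n$-equivariant version holds for $n\le 24$, and then \emph{assumes} the conjecture as a hypothesis in the Main Theorem. So there is no ``paper's own proof'' to compare your proposal against.

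Your write-up is in fact not a proof either, and you say as much: after the clean reduction to ``F-nef and nef on $D$ implies nef on $\mo$,'' you correctly identify that moving an interior curve into the boundary, or classifying all extremal contractions of $\mo$, is exactly the open problem. The inductive reduction you sketch (restricting to boundary strata and using multiplicativity of the F-conjecture under products) is standard and appears in the Gibney--Keel--Morrison and Keel--McKernan circle of ideas; it is a genuine reduction but not a proof, because the last step --- controlling curves that meet the interior --- remains unresolved in general. Your closing paragraph accurately describes what the paper actually does: it sidesteps the full conjecture by working with specific $\mathfrak{s}_n$-invariant divisors $A_\alpha$, proving they are F-nef unconditionally (Proposition~\ref{main}) and nef either for small $n$ via Farkas--Gibney, for certain ranges of $\alpha$ directly (Section~5), or conditionally on the conjecture otherwise. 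In short, your proposal is a reasonable survey of the known strategy and its obstruction, but it should be presented as commentary on an open problem, not as a proof.
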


\begin{theorem}[Farkas, Gibney][FaGi, Gi]
Fulton's conjecture is true for $\mathfrak{s}_n$-equivariant divisors on $\mo$
for $n\leq 24$.
\end{theorem}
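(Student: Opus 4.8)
The plan is to turn Fulton's conjecture for $\mathfrak{s}_n$-equivariant divisors into a statement about two explicit polyhedral cones and then to verify that these cones are dual to one another. The symmetric N\'eron--Severi space $N^1(\mo)^{\mathfrak{s}_n}$ is finite-dimensional --- by the [Pan] lemma of Section 3 it is spanned by $D_2,\dots,D_{\floor}$, so it has dimension $\floor-1$ --- and the symmetric $1$-cycles form its dual. The $\mathfrak{s}_n$-orbits of vital curves correspond to partitions of $\{1,\dots,n\}$ into four nonempty subsets taken up to $\mathfrak{s}_n$, i.e. to partitions of the integer $n$ into four positive parts, so there are only finitely many symmetrized vital-curve classes, and using Keel's presentation of the Chow ring (as in Section 3) one writes each one explicitly against $\{D_j\}$. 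Let $P$ be the rational polyhedral cone in $N_1(\mo)^{\mathfrak{s}_n}$ that they span. Because averaging over $\mathfrak{s}_n$ makes the invariant and non-invariant parts of $N^1$ and $N_1$ orthogonal under the intersection pairing, Fulton's conjecture for symmetric divisors is equivalent to $\bar{NE}_1(\mo)^{\mathfrak{s}_n}=P$, and since $P\subseteq\bar{NE}_1^{\mathfrak{s}_n}$ is automatic and both are closed convex cones, it is equivalent to $P^{\vee}\subseteq\mathrm{Nef}(\mo)$: every $\mathfrak{s}_n$-equivariant $F$-nef divisor should be nef.

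For the inclusion $P^{\vee}\subseteq\mathrm{Nef}(\mo)$ the plan is to compute the finitely many extremal rays of $P^{\vee}$ by vertex enumeration from the explicit inequalities above, and then to certify each of them as a nef class. The divisors available for this are all pullbacks of ample or nef classes along known morphisms out of $\mo$: along forgetful maps $\mo\to\bar{\bo{M}}_{0,m}$ with $m<n$; along the Hassett reduction morphisms $\rho:\mo\to\wo$ of Section 3, whose pullbacks $A_{\alpha}=\rho^{*}(K_{\wo}+\alpha E)$ are nef as soon as $K_{\wo}+\alpha E$ is ample, together with the pullbacks from $\IP^1/SL_2$ in the range $na\le 2$ (the natural polarizations of $(\IP^1)^n/\!/SL_2$); and, more generally, symmetrizations of pullbacks of ample classes under any birational contraction of $\mo$. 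The assertion to be verified is that for $n\le 24$ the cone generated by this collection already contains every extremal ray of $P^{\vee}$ --- a finite, computer-assisted check once the rays have been listed.

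A complementary tool, used to deal with rays that are not manifestly such pullbacks, is restriction to the boundary together with induction on $n$. A symmetric divisor $D$ is nef exactly when its restriction to each boundary divisor $D_i$ --- a disjoint union of copies of $\bar{\bo{M}}_{0,i+1}\x\bar{\bo{M}}_{0,n-i+1}$ --- is nef and $D$ is nonnegative on a spanning set of curves that meet the interior. If $D$ is $F$-nef, then so is each boundary restriction, since vital curves of such a product are vital curves of $\mo$; and because $N^1$ and $\bar{NE}_1$ of a product of moduli spaces $\bar{\bo{M}}_{0,m}$ split as direct sums, Fulton's conjecture for such a product reduces to the conjecture for its factors, each of which has fewer than $n$ marked points. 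Thus the boundary behaviour of an $F$-nef symmetric divisor is controlled inductively, and only the curves meeting the interior remain, to be handled by the certification step above.

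The hard part is precisely that certification: there is no formal reason why the extremal generators of $P^{\vee}$ should be nef --- that coincidence is the content of Fulton's conjecture --- so one must actually exhibit each of them as coming from an honest contraction or fibration of $\mo$, and the number of extremal rays of $P^{\vee}$ grows roughly polynomially in $n$. It is this growth, combined with the task of checking that the finite list of known nef classes still suffices, that confines the argument to $n\le 24$; going further would require a substantially larger computation, or a structural theorem saying that the contractions listed above exhaust the relevant faces.
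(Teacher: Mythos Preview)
The paper does not prove this theorem; it is quoted without proof as a result from the cited references [FaGi] and [Gi], so there is no ``paper's own proof'' to compare your proposal against.

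Your outline is a reasonable high-level description of the strategy those references actually employ: one enumerates the finitely many extremal rays of the symmetric $F$-nef cone by computer and then certifies each ray as nef by writing it as a nonnegative combination of classes already known to be nef (pullbacks along forgetful maps, reduction morphisms, GIT quotients, and related constructions). The bound $n\le 24$ in [Gi] records how far this computation was carried, not a conceptual barrier.

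One caution about your sketch: the criterion you invoke --- that a divisor is nef once its restriction to each boundary component is nef and it is nonnegative on some curves meeting the interior --- is not a general theorem, since nefness is not Zariski-local in that sense. What the cited papers actually do is certify each extremal $F$-nef ray \emph{globally} as nef; the boundary-restriction-plus-induction picture you describe is more heuristic than rigorous in the form stated. The rest of your plan (finite-dimensionality of the symmetric N\'eron--Severi space, vertex enumeration, certification by pullbacks) matches the published arguments in spirit.
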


Any vital curve $C$ is represented by a partition $a+b+c+d=n$.
Specifically the n-pointed stable curve corresponding to the generic
point of $C$ has n-3 components and therefore exactly one component
with 4 'special points'-marked points or nodes-which partition the marked points into four sets.  Figure 1, for example, corresponds to the partition $1+2+3+4=10$.
 The intersections with such a curve are well known:

\begin{theorem}[Keel, McKernan][KeM]\label{number}
If a divisor equals $\sum_{j=2}^{\lfloor n/2\rfloor}r_j D_j$ than
it is F-nef if and only if
$$r_{a+b}+r_{a+c}+r_{a+c}-r_a-r_b-r_c-r_d \geq 0$$ for all partitions
$a+b+c+d=n$.  Here $r_i=r_{n-i}$ and $r_1=0$.
\end{theorem}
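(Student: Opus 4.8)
The plan is to compute the intersection number of an arbitrary vital curve $C$ with the divisor $\sum_{j=2}^{\floor} r_j D_j$, and then to observe that, by definition, F-nefness is exactly the requirement that this number be nonnegative for every such $C$. By the description of vital curves recalled just above, $C$ is the image of a closed immersion $\iota\colon\bar{\bo{M}}_{0,4}\hookrightarrow\mo$ determined by an ordered partition $\{1,\dots,n\}=A_1\sqcup A_2\sqcup A_3\sqcup A_4$ into nonempty subsets: $\iota$ sends a $4$-pointed stable rational curve to the $n$-pointed curve obtained by gluing to its $i$-th marked point a fixed stable rational tail carrying the markings in $A_i$ (when $|A_i|=1$ this tail is just the marked point itself). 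Write $a,b,c,d$ for $|A_1|,|A_2|,|A_3|,|A_4|$. Since $\bar{\bo{M}}_{0,4}\cong\mathbb{P}^1$ we have $C\cdot\delta_S=\deg\iota^{*}\mathcal{O}(\delta_S)$ for each boundary divisor $\delta_S$ of $\mo$, so the whole computation reduces to understanding how each $\delta_S$ restricts to $C$.

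The first step is to read off, from the dual graphs of the members of this family, which $\delta_S$ can meet $C$ at all. The generic member has a single central component carrying the four tails, and the three special members (the boundary points of $\bar{\bo{M}}_{0,4}$) each acquire one extra node separating two of the tails from the other two. Hence a node with one side equal to $S$ occurs somewhere in the family only if $S$, equivalently $S^{c}$, is contained in a single $A_i$, equals a single $A_i$, or equals a union $A_i\cup A_l$; for every other $S$ one has $C\cap\delta_S=\varnothing$ and $C\cdot\delta_S=0$. If $S$ (up to complement) is a union $A_i\cup A_l$, then $C\not\subseteq\delta_S$ and $C$ meets $\delta_S$ transversally at the single point of $\bar{\bo{M}}_{0,4}$ where the $i$-th and $l$-th tails collide, so $C\cdot\delta_S=1$. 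If $S$ (up to complement) equals $A_i$, or is a proper subset of some $A_i$ with $|S|\geq2$, then $C\subseteq\delta_S$ and $C\cdot\delta_S=\deg\left(c_1(N_{\delta_S/\mo})\big|_C\right)$; using Keel's product description $\delta_S\cong\bar{\bo{M}}_{0,|S|+1}\times\bar{\bo{M}}_{0,|S^{c}|+1}$ together with the identity $c_1(N_{\delta_S/\mo})=-\psi_{\bullet}-\psi_{\star}$ (minus the cotangent classes at the two branches of the node), one checks that, restricted to $C$, the factor coming from the frozen side is trivial while the other has degree $1$ exactly when $S=A_i$ (the node is one of the four moving special points on $\bar{\bo{M}}_{0,4}$) and degree $0$ when $S\subsetneq A_i$ (the node is frozen inside a fixed tail). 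Thus $C\cdot\delta_S=-1$ when $S$ or $S^{c}$ equals some $A_i$, and $C\cdot\delta_S=0$ when $S$ is a proper subset of some $A_i$.

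Summing these values over all $S$ with $|S|=j$ gives $C\cdot D_j$; summing $r_j\,(C\cdot D_j)$ over $j$, using $r_j=r_{n-j}$ to match the symmetric divisors and $r_1=0$ to absorb harmlessly a part of size $1$ (which has no associated boundary divisor), yields
$$C\cdot\Big(\sum_{j} r_j D_j\Big)=r_{a+b}+r_{a+c}+r_{a+d}-r_a-r_b-r_c-r_d.$$
Any coincidences among the integers $a,b,c,d$ and their pairwise sums are handled automatically, since the right-hand side is simply the value of the intersection pairing. Because $\sum_j r_j D_j$ is F-nef exactly when $C\cdot\left(\sum_j r_j D_j\right)\geq0$ for every vital curve, and this number — as the formula shows — depends only on the cardinalities $a+b+c+d=n$ (and every such quadruple of positive integers arises from a vital curve), this is precisely the asserted equivalence.

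The step I expect to require the most care is the case $C\subseteq\delta_S$: one must carry out the normal bundle (excess intersection) computation cleanly, and keep the bookkeeping straight when a tail has size $1$ or when several of the sets $A_i$ or $A_i\cup A_l$ share a cardinality $j$, so that a single $D_j$ collects several of the $\pm1$ contributions at once. The transversality assertions and the enumeration of the degenerations of a $\bar{\bo{M}}_{0,4}$-family are routine once the dual graph picture is in place.
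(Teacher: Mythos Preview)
The paper does not supply its own proof of this theorem: it is quoted as a result of Keel and McKernan and cited to [KeM], with no argument given. So there is nothing in the paper to compare your proposal against.

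That said, your proposal is correct and is essentially the standard argument one finds in the literature. You correctly identify a vital curve with a one-parameter family $\bar{\bo{M}}_{0,4}\hookrightarrow\mo$ indexed by a partition $A_1\sqcup A_2\sqcup A_3\sqcup A_4$, and your case analysis of $C\cdot\delta_S$ is right: the three transversal $+1$'s come from the pairwise collisions $A_i\cup A_l$, the four $-1$'s from the self-intersection via $c_1(N_{\delta_S/\mo})=-\psi_\bullet-\psi_\star$ when $S=A_i$, and all other $\delta_S$ contribute zero. The conventions $r_j=r_{n-j}$ and $r_1=0$ are exactly what is needed to make the formula uniform. Your own caveat is well placed: the excess-intersection step when $C\subset\delta_S$ is the only place requiring real care, specifically the assertion that the pullback of the relevant $\psi$-class to $\bar{\bo{M}}_{0,4}$ has degree $1$ (respectively $0$) when the node is moving (respectively frozen in a tail); this is standard but worth one explicit line. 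Note also that the displayed inequality in the paper contains a typo ($r_{a+c}$ appears twice); your derivation of $r_{a+b}+r_{a+c}+r_{a+d}-r_a-r_b-r_c-r_d$ is the intended formula.
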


We are now in position to prove our main result.

\begin{theorem}[Main Theorem]\quad\\
Fix $n \ge 4$ and $\alpha$ a rational number in $(\frac{2}{n-1},1]$.
Assume the ($\mathfrak{s}_n$-equivariant) Fulton conjecture.  Let
$\overline{M}_{0,n}(\alpha)$ denote the log canonical model
of $\overline{M}_{0,n}$ with respect to $K+\alpha D$.
If $\alpha$ is in the range $(\frac{2}{k+2},\frac{2}{k+1}]$
for some $k=1,..., \lfloor \frac{n-1}{2} \rfloor$  then
         $\overline{M}_{0,n}(\alpha) \simeq \overline{M}_{0,(1/k,...,1/k)}$.
If $\alpha$ is in the range $(\frac{2}{n-1},\frac{2}{\lfloor n/2\rfloor + 1}]$ then
         $\overline{M}_{0,n}(\alpha) \simeq (P^1)^n // SL_2.$

\end{theorem}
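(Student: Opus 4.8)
The plan is to reduce the Main Theorem to Corollary \ref{cor}, which says that if $K_{\wo}+\alpha E$ is ample then $\wo$ is the log canonical model of $\mo$ with respect to $K_{\mo}+\alpha D$. By Theorem~8 we already know $\rho^*(K_{\wo}+\alpha E) = A_\alpha$, so the entire content of the Main Theorem is the statement that $A_\alpha$ is nef and contracts only $\rho$-exceptional curves; then $K_{\wo}+\alpha E$, being the descent of $A_\alpha$, will be ample on $\wo$. The translation between the weight parameter and $\alpha$ is exactly the one set up in the definitions: when $\alpha \in (\frac{2}{k+2}, \frac{2}{k+1}]$ the integer $k$ in the Definition of $A_\alpha$ is this same $k$, and the symmetric weight space $\wo$ with weight in $(\frac{1}{k+1}, \frac{1}{k}]$ is exactly $\overline{M}_{0,(1/k,\dots,1/k)}$; the degenerate range $\alpha \le \frac{2}{\lfloor n/2\rfloor+1}$ is the case $k = \lfloor n/2\rfloor$ handled by the Note after Corollary~\ref{cor}, where $\wo = (\mathbb{P}^1)^n/\!/SL_2$. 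So I would first dispense with these bookkeeping identifications in a sentence or two.

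The heart of the argument is to show $A_\alpha$ is F-nef, i.e.\ intersects every vital curve nonnegatively; then the $\mathfrak{s}_n$-equivariant Fulton conjecture (valid since $A_\alpha$ is $\mathfrak{s}_n$-equivariant by construction) upgrades F-nefness to nefness. Write $A_\alpha = \sum_{j=2}^{\lfloor n/2\rfloor} r_j D_j$ using the Lemma of Pandharipande for $K_{\mo}$: combining that formula with the Definition gives $r_j = \binom{j}{2}\alpha - (j-2) + \frac{(j-2)(n-1) - j(j-1)}{n-1}$ for $2 \le j \le k$ and $r_j = \alpha + \frac{(j-2)(n-1)-j(j-1)}{n-1}$ for $k < j \le \lfloor n/2\rfloor$ — I would simplify these and set $r_1 = 0$, $r_i = r_{n-i}$ as in Theorem~\ref{number}. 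Then by Theorem~\ref{number} (Keel--McKernan) I must check $r_{a+b} + r_{a+c} + r_{a+d} - r_a - r_b - r_c - r_d \ge 0$ for every partition $a+b+c+d = n$. The expected mechanism is that the piecewise-linear (in $j$) nature of $r_j$, with the two regimes meeting at $j = k$, makes $r_j$ a concave-like function whose relevant second-difference combination is nonnegative precisely because of the constraint $\alpha \le \frac{2}{k+1}$; the boundary case $\alpha = \frac{2}{k+1}$ should give equality on certain partitions (these are the vital curves that get contracted). This is the main calculational obstacle: organizing the case analysis over which of the indices $a, b, c, d$ and the pairwise sums $a+b$, etc., fall in the range $\le k$ versus $> k$, and verifying the inequality in each regime.

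Having established that $A_\alpha$ is nef, the second thing to pin down is that the only curves $A_\alpha$ contracts are $\rho$-exceptional. Here I would argue that $A_\alpha = \rho^*(K_{\wo}+\alpha E)$ has zero intersection with a curve $C$ iff $\rho_* C$ has zero intersection with $K_{\wo}+\alpha E$; since $K_{\wo} + \alpha E$ is big (its pullback $A_\alpha$ differs from the big divisor $K_{\mo}+\alpha D$ by the effective $\rho$-exceptional divisor of Theorem~8, and bigness is preserved) and nef on the normal projective variety $\wo$, I need that a nef and big divisor on $\wo$ in this range is in fact ample — equivalently that it has positive degree on every curve of $\wo$. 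The clean route is again Fulton's conjecture, now on $\wo$ (or rather: a curve $C$ with $A_\alpha \cdot C = 0$ must be numerically a nonnegative combination of vital curves each meeting $A_\alpha$ in zero, and one checks directly that the only vital curves with $A_\alpha \cdot C = 0$ are exactly those contracted by $\rho$, namely the ones dual to partitions forcing a component that becomes unstable under weight $1/k$). Assembling: $A_\alpha$ nef $+$ contracts only $\rho$-exceptional curves $\Rightarrow$ (Corollary~\ref{cor}) $\wo$ is the log canonical model, which is the claim. I expect the partition case-check for F-nefness to be the real work; everything else is formal given the preliminaries.
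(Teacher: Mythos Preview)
Your proposal is correct and follows essentially the paper's approach: reduce via Corollary~\ref{cor} to proving that $A_\alpha$ is F-nef with strict positivity on the non-$\rho$-exceptional vital curves, then invoke the equivariant Fulton conjecture. Two simplifications the paper uses that you should adopt: (i) $A_\alpha$ is affine in $\alpha$ on each interval $[\frac{2}{k+2},\frac{2}{k+1}]$, so it suffices to verify the Keel--McKernan inequalities at the endpoints $\alpha=\frac{2}{k+1}$, which streamlines the fifteen-case computation considerably; and (ii) the ``contracts only $\rho$-exceptional curves'' step falls out of that same case analysis for free---the unique case yielding zero rather than a strictly positive intersection number is $a+b+c\le k$ (with $a\le b\le c\le d$), and these are precisely the vital curves contracted by $\rho$---so your separate bigness detour in the last paragraph is unnecessary.
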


Applying corollary \ref{cor} we will prove the main theorem by showing that the $A_{\alpha}$ divisors are F-nef.

\begin{proposition}\label{main}
$A_{\alpha}$ is F-nef for all $n,\alpha $.
\end{proposition}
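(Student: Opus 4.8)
The plan is to reduce to the Keel--McKernan criterion (Theorem~\ref{number}) and verify the resulting numerical inequality by a regime analysis. Write $A_\alpha=\sum_{j=2}^{\floor}r_jD_j$ and set $\epsilon=\alpha-\tfrac{2}{n-1}>0$. Combining Pan's formula for $K_{\mo}$ with the definition of $A_\alpha$ gives
\[
 r_j=\binom{j}{2}\epsilon\quad(2\le j\le k),\qquad r_j=\frac{(j-2)(n-j-2)}{n-1}+\epsilon\quad(k<j\le\floor),
\]
and the ``middle'' expression is already invariant under $j\mapsto n-j$, so after extending by $r_i=r_{n-i}$ and $r_0=r_1=0$ the coefficient pattern is: zero on $\{0,1\}$, a convex quadratic $\binom{j}{2}\epsilon$ on the ``small'' range, and the downward parabola $\tfrac{(j-2)(n-j-2)}{n-1}+\epsilon$ on the ``middle'' range $k<j<n-k$. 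The hypothesis $\alpha\le\tfrac{2}{k+1}$ means exactly that the small-range coefficients sit on or below the value the middle parabola would assign.

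Next I would reduce to break-points in $\alpha$. For fixed $k$ the $r_j$ are affine in $\alpha$, and the two formulas agree at $j=k$ when $\alpha=\tfrac{2}{k+1}$, so $\alpha\mapsto A_\alpha$ is continuous and piecewise linear on $(\tfrac{2}{n-1},1]$ with break-points only at $\alpha=\tfrac{2}{m+1}$, $m=1,\dots,\floor$. Since $A_\alpha\cdot C$ is linear in $A_\alpha$ for every curve $C$ and $A_\alpha\to0$ as $\alpha\to\tfrac{2}{n-1}^{+}$, it suffices to prove F-nefness at each break-point $\alpha=\tfrac{2}{m+1}$, where $\epsilon$ is pinned to $\epsilon_m=\tfrac{2(n-m-2)}{(m+1)(n-1)}$ and the small and middle expressions glue at $j=m$; this removes the parameter. (Equivalently one can keep $\alpha$ and use that the Keel--McKernan expression is linear in $\alpha$ on $\tfrac{2}{k+2}<\alpha\le\tfrac{2}{k+1}$.)

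It then remains to check, for the break-point coefficients and every partition $a+b+c+d=n$ into positive integers, that $r_{a+b}+r_{a+c}+r_{a+d}-r_a-r_b-r_c-r_d\ge0$. After ordering $a\ge b\ge c\ge d\ge1$ and reducing indices by $r_i=r_{n-i}$, the left side becomes the symmetric quantity $\tfrac12\sum_{i<j}r_{x_i+x_j}-\sum_i r_{x_i}$, and one splits into cases according to which of the seven reduced indices (the four parts and the three reduced pair-sums) lie in the small range $\le m$ and which in the middle range. If all are small then $r_j=\binom{\overline{j}}{2}\epsilon_m$ throughout and the binomials telescope, leaving $\epsilon_m(n-1)$ times a sum of nonnegative terms; if all are middle then it collapses to $\tfrac{2(n-2)}{n-1}-\epsilon_m$, which is nonnegative since $\epsilon_m\le\tfrac{2(n-2)}{n-1}$. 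The remaining mixed cases are where the argument has teeth: there some small-regime coefficient lies strictly below the parabola, and nonnegativity is recovered only by using the exact value $\epsilon=\epsilon_m$, i.e.\ $\alpha\le\tfrac{2}{m+1}$. I would also record that the vital curve $C_{(n-3,1,1,1)}$ is $\rho$-contracted once $k\ge3$ (its modular component, with three marked points and a node, becomes unstable), so $A_\alpha\cdot C_{(n-3,1,1,1)}=0$ automatically.

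The main obstacle is the bookkeeping in this last step: several regime combinations for the seven indices, the reduction $r_i=r_{n-i}$ to be tracked with care (a pair-sum can exceed $\floor$ even when every part is small), and, in the mixed cases, the need to extract the inequality as a genuine consequence of $m$ being the \emph{largest} integer with $\alpha\le\tfrac{2}{m+1}$ rather than as a formal identity. I expect that, organized by how many small indices occur among the parts and among the pair-sums, each case reduces to a short polynomial inequality in $a,b,c,d,m,n$, so the work lies in assembling these cleanly rather than in any single estimate.
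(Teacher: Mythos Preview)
Your strategy is correct and is essentially the same as the paper's: reduce via the Keel--McKernan criterion, use piecewise linearity in $\alpha$ to pass to the break-points $\alpha=\tfrac{2}{k+1}$, and then do a regime analysis according to which of $a,b,c,d$ and the (reduced) pair-sums fall in the ``small'' range $\le k$ versus the ``middle'' range. The paper carries this out as a fifteen-case computation (your ``all small'' and ``all middle'' cases are their cases 9 and 1, with the same values $0$ and $\tfrac{2k}{k+1}=\tfrac{2(n-2)}{n-1}-\epsilon_k$), so what you identify as the main obstacle is exactly the content of the paper's proof.
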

Using the above criterion, we need only check that the coefficients
of $A_{\alpha}$ satisfy the inequalities for each partition.  We
observe that if $\alpha$ is in \\$[2/(k+2),2/(k+1)]$ then
$A_{\alpha}$ is a convex sum of $A_{2/(k+2)}$ and $A_{2/(k+2)}$:
$$\alpha =\frac{2t}{k+2}+\frac{2(t-1)}{k+1}\text{ implies }
A_{\alpha}=tA_{2/(k+2)} +(1-t)A_{2/(k+2)}.$$ Therefore to prove that
intersections are nonnegative we only need to check $\alpha$ of the form
$\frac{2}{k+1}$. Henceforth let $\alpha=\frac{2}{k+1}.$ By
theorems \ref{nef} and \ref{neff} we need only check
 $\alpha$ for  $2 \leq k \leq \lfloor n/2 \rfloor-1$.
\begin{lemma}
Let $n\geq 6$ and $1< k< \lfloor n/2 \rfloor$ be arbitrary. Set
$\alpha=\frac{2}{k+1}$ and define the following functions:
$$f(j)=\frac{j(j-1)\left((n-1)\frac{\alpha}{2}-1\right)}{n-1},  \quad  g(j)=\frac{(n-1)(\alpha+j-2)-j(j-1)}{n-1}.$$
Then, for any sum of positive integers $a+b+c+d=n$  the function
\begin{equation}
h(j)=
\begin{cases}
f(j),   &\text{for $1\leq j\leq k$;}\\
g(j),   &\text{for $k< j< n-k$;}\\
f(n-j),   &\text{for $n-k\leq j\leq n$}\\
\end{cases}
\end{equation}
\text{satisfies }$h(a+b)+h(a+c)+h(a+d)-h(a)-h(b)-h(c)-h(d)\geq 0.$
\end{lemma}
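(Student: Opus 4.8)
The plan is to reduce the inequality to a statement about a single-variable function and then verify it by a case analysis on the sizes of $a,b,c,d$ relative to the threshold $k$. First I would record the key structural features of $h$: on the range $1\le j\le k$ it agrees with $f$, which one checks is a concave quadratic in $j$ (the leading coefficient is $\frac{(n-1)\alpha/2-1}{n-1}$, and since $\alpha=\tfrac{2}{k+1}$ and $k<\lfloor n/2\rfloor$ this is negative); on the middle range $k<j<n-k$ it agrees with $g$, which is $g(j)=(j-2)+\alpha-\tfrac{j(j-1)}{n-1}$, again concave in $j$ with the \emph{same} leading coefficient $-\tfrac{1}{n-1}$ up to the harmless affine shift — in fact $g(j)-f(j)=(j-2)-\tfrac{\alpha}{2}j(j-1)+\dots$ should be arranged to vanish to first order at $j=k$ so that $h$ is continuous and ``breaks concavity downward'' only at the two symmetric kinks $j=k$ and $j=n-k$. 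I would make the normalization $h(1)=0$ explicit (this matches $r_1=0$ in Theorem~\ref{number}) and note the symmetry $h(j)=h(n-j)$.

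Next, observe that the six-term expression $\Phi(a,b,c,d):=h(a+b)+h(a+c)+h(a+d)-h(a)-h(b)-h(c)-h(d)$ is exactly the slope-type combination that is nonnegative for \emph{any} concave function normalized by $h(1)=0$ with $h$ defined on $[1,n]$, \emph{provided} all the arguments $a+b,a+c,a+d$ stay in a region where $h$ is concave. Indeed, writing $a+b+c+d=n$, one has $a+b=n-c-d$, etc., so the three ``sum'' arguments are $n-c-d,\ n-b-d,\ n-b-c$, and concavity of $h$ together with the identity $(a+b)+(a+c)+(a+d)=n+2a=(n-a)+3a$ lets one compare $\tfrac13\sum h(a+\cdot)$ with $h$ evaluated at the average. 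So the proof splits according to how many of $a,b,c,d$ exceed $k$ (equivalently how many of the complementary sums drop below $n-k$), and symmetrically how many sums $a+b$ etc.\ land in each of the three pieces of the definition of $h$.

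The main case is when all four of $a,b,c,d\le k$, so each lies in the $f$-range; then each pairwise sum $a+b\le 2k<n$ may land either in the $f$-range (if $\le k$), the $g$-range, or the $f(n-\cdot)$-range. Here I would use the two facts that $f$ and $g$ are concave with the proper matching at $j=k$, and that replacing $h$ on the middle by the single concave parabola extending $f$ only \emph{decreases} $h$ there (because the downward kink means $g\le$ that extension), so $\Phi$ computed with the genuine $h$ dominates $\Phi$ computed with a globally concave function, which is $\ge 0$. The remaining cases (one or more of $a,b,c,d$ exceeding $k$) force, by $a+b+c+d=n$ and $k<\lfloor n/2\rfloor$, that at most one index exceeds $k$ and then several of the sums are pinned into the $g$-range or the upper $f$-range; these are finitely many sub-configurations and each reduces to the same concavity comparison plus the boundary normalization $h(1)=0$, $h(j)=h(n-j)$.

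The hard part will be bookkeeping the kink: $h$ is \emph{not} globally concave — it has two upward corners at $j=k$ and $j=n-k$ — so the clean one-line concavity argument fails precisely when the ``sum'' arguments straddle a kink while the ``individual'' arguments do not (or vice versa). I expect to handle this by the explicit substitution $\alpha=\tfrac{2}{k+1}$: then $f(k)=\tfrac{k(k-1)}{k+1}\cdot\tfrac{1}{n-1}\cdot\big(\dots\big)$ and $g(k)$ coincide, and more importantly the \emph{one-sided derivatives} satisfy $g'(k^+)-f'(k^-)>0$ by an amount one can bound below, so that the error introduced by the kink in any straddling term is controlled by an explicit positive multiple of the gap between the arguments, which is in turn absorbed by the slack in the concave estimate coming from the other terms. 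Verifying that this absorption always works — i.e.\ that the concave surplus beats the kink deficit for every admissible partition $a+b+c+d=n$ — is the real content, and I would organize it as a short finite check after normalizing $a\le b\le c\le d$ and using $a\le n/4$.
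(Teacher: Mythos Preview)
Your approach has a sign error at the outset that undermines the whole strategy. The leading coefficient of $f(j)$ is $\frac{(n-1)\alpha/2-1}{n-1}$; with $\alpha=\frac{2}{k+1}$ this is $\frac{1}{n-1}\cdot\frac{n-k-2}{k+1}$, and since $k<\lfloor n/2\rfloor$ we have $n-k-2>0$, so $f$ is \emph{convex}, not concave. (By contrast $g$ is genuinely concave.) So $h$ is convex on $[1,k]$, concave on $[k,n-k]$, convex again on $[n-k,n]$, and the kinks at $k$ and $n-k$ go the opposite way from what you assert. The comparison ``replacing $h$ on the middle by the single concave parabola extending $f$ only decreases $h$ there'' is therefore backwards, and there is no globally concave minorant to fall back on.

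More seriously, even the heuristic ``$\Phi\ge 0$ for any concave $h$ with $h(1)=0$ and $h(j)=h(n-j)$'' is false: take $h(j)=-(j-1)(n-1-j)$, which is concave, symmetric, and vanishes at $1$ and $n-1$; for $n=6$ and $(a,b,c,d)=(1,1,1,3)$ one computes $\Phi=-5$. So concavity alone is not the mechanism here; the actual $h$ in the lemma mixes convex and concave pieces in a specific way, and the positivity is not a one-line convexity/concavity fact. Finally, your reduction ``at most one index exceeds $k$'' is also wrong: for small $k$ (e.g.\ $n=8$, $k=2$, $(a,b,c,d)=(1,1,3,3)$, or even $a=b=c=d>k$ in case~1 of the paper) several or all of $a,b,c,d$ can exceed $k$. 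The paper's proof abandons any uniform curvature argument and instead partitions into fifteen cases according to which of $a,b,c,d,a+b,a+c,a+d$ lie in each piece of $h$, computes $\Phi$ explicitly in each case (it always collapses to a short expression over $k+1$), and checks the sign directly.
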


\begin{proof}
The function $h(a+b)+h(a+c)+h(a+d)-h(a)-h(b)-h(c)-h(d)$ depends
heavily on what the value of $k$ is with respect to $n,a,b,c$ and
$d$. For a given $n,a,b,c,d$ with $a\leq b\leq c\leq d$ we will
partition the possible values of $k$ into subsets on which $h$ acts
'nicely'. Specifically, we will look at the following partition into
cases. The fact that some of these cases may not exist for certain
values of $a,b,c,d$ will be dealt with as the individual cases are
examined.

\begin{enumerate}
\item $a>k$  and $a\leq b\leq c\leq d$ as always.
\item $a\leq k, b>k$
\item $b\leq k, c>k, a+b>k$
\item $b\leq k, c>k, a+b\leq k$
\item $c\leq k, d>k, a+b>k$
\item $c\leq k, d>k, a+b\leq k, a+c>k$
\item $c\leq k, d>k, a+c\leq k,b+c>k$
\item $c\leq k, d>k, b+c\leq k,a+b+c> k$
\item $c\leq k, d>k, b+c\leq k,a+b+c\leq k$
\item $d\leq k, a+b>k$
\item $d\leq k, a+b\leq k, a+c>k$
\item $d\leq k, a+c\leq k, a+d>k,     b+c>k$
\item $d\leq k, a+c\leq k, a+d\leq k, b+c>k$
\item $d\leq k, a+c\leq k, a+d> k,    b+c\leq k$
\item $d\leq k, a+c\leq k, a+d\leq k, b+c\leq k$
\end{enumerate}

These cases are all-inclusive.  We will now show that
$h(a+b)+h(a+c)+h(a+d)-h(a)-h(b)-h(c)-h(d)$ is nonnegative for each
of these subsets.  In all cases, the inequality comes easily from
the inequalities in the given cases.  We will skim the details since they are easy and repetitive.\\\\
\bold{case 1)} In this case, the inequalities show us that
$h(a+b)+h(a+c)+h(a+d)-h(a)-h(b)-h(c)-h(d)$ can be written as
$g(a+b)+g(a+c)+g(a+d)-g(a)-g(b)-g(c)-g(d)$.  This is because all
the values $a,b,c,d,a+b,a+c,a+d$ are by assumption greater than
$k$ and they are all less than $n-k$ by observation.  For example
$a+d\leq n-k$ since $b>k$ and $c>k$ and $a+b+c+d=n$. Using this we
get the calculation(after some simplification)
\begin{align*}
&\quad h(a+b)+h(a+c)+h(a+d)-h(a)-h(b)-h(c)-h(d)\\
&=g(a+b)+g(a+c)+g(a+d)-g(a)-g(b)-g(c)-g(d)\\
&=\frac{2k}{1+k}\end{align*}
 which is clearly positive.\\\\
\bold{case 2)} Under the given hypotheses we get
$h(a+b)+h(a+c)+h(a+d)-h(a)-h(b)-h(c)-h(d)=g(a+b)+g(a+c)+g(a+d)-f(a)-g(b)-g(c)-g(d)$.
This gives us
$$h(a+b)+h(a+c)+h(a+d)-h(a)-h(b)-h(c)-h(d)=\frac{a(2+k-a)}{1+k}$$
which is again nonnegative since $k\geq a$ in this case.\\\\
\bold{case 3)} In this case we have
\begin{align*}
&\quad h(a+b)+h(a+c)+h(a+d)-h(a)-h(b)-h(c)-h(d)\\
&=g(a+b)+g(a+c)+g(a+d)-f(a)-f(b)-g(c)-g(d)\\
&=\frac{(b - 2) (k - b) + a (2 + k - a)}{1+k}.\end{align*}  The
second half of this is positive since $k\geq a$.  The first half
is similarly positive unless $b=1$.  but that would imply $a=1$
and thus $2=a+b>k$ which does not hold.   Thus the whole equation
is positive.\\\\
\bold{case 4)} Here we have
\begin{align*}
&\quad h(a+b)+h(a+c)+h(a+d)-h(a)-h(b)-h(c)-h(d)\\
&=f(a+b)+g(a+c)+g(a+d)-f(a)-f(b)-g(c)-g(d)\\
&=\frac{2 a b}{1+k}>0.\end{align*}\\\\
 \bold{case 5)}
\begin{align*}
&\quad h(a+b)+h(a+c)+h(a+d)-h(a)-h(b)-h(c)-h(d)\\
&=g(a+b)+g(a+c)+g(a+d)-f(a)-f(b)-f(c)-g(d)\\
&=\frac{(k - c)(c - 2) + (k - b)(b - 2) + a(2 + k - a)}{1+k}\end{align*} which is positive except potentially when $b=1$.  However, $a=b=1$ is not possible in this case.\\\\
\bold{case 6)}
\begin{align*}
&\quad h(a+b)+h(a+c)+h(a+d)-h(a)-h(b)-h(c)-h(d)\\
&=f(a+b)+g(a+c)+g(a+d)-f(a)-f(b)-f(c)-g(d)\\
&=\frac{2 a b + (c - 2) (k - c)}{1+k}\end{align*} which is positive because $k\geq c$ and $a+c>k$ so that $c\geq 2.$\\\\
\bold{case 7)}
\begin{align*}
&\quad h(a+b)+h(a+c)+h(a+d)-h(a)-h(b)-h(c)-h(d)\\
&=f(a+b)+f(a+c)+g(a+d)-f(a)-f(b)-f(c)-g(d)\\
&=\frac{a (a + 2 b + 2 c - 2 - k)}{1+k}>0\text{ since } b+c>k.\end{align*}\\\\
 \bold{case 8)}
 \begin{align*}
&\quad h(a+b)+h(a+c)+h(a+d)-h(a)-h(b)-h(c)-h(d)\\
&=f(a+b)+f(a+c)+f(b+c)-f(a)-f(b)-f(c)-g(d)\\
&=\frac{(a+b+c-2)(a+b+c-k)}{1+k}> 0\text{ since } a+b+c>k.\end{align*}\\\\
 \bold{case 9)}
 \begin{align*}
&\quad h(a+b)+h(a+c)+h(a+d)-h(a)-h(b)-h(c)-h(d)\\
&=f(a+b)+f(a+c)+f(b+c)-f(a)-f(b)-f(c)-f(a+b+c)\\
&=0\end{align*}\\\\
\bold{case 10)}
 \begin{align*}
&\quad h(a+b)+h(a+c)+h(a+d)-h(a)-h(b)-h(c)-h(d)\\
&=g(a+b)+g(a+c)+g(a+d)-f(a)-f(b)-f(c)-f(d)\\
&=\frac{(k - d)(d - 2) + (k - c)(c - 2) + (k - b)(b - 2) + a(2 + k - a)}{1+k}\end{align*} is positive by the identical argument as in case $5$.\\\\
 \bold{case 11)}
 \begin{align*}
&\quad h(a+b)+h(a+c)+h(a+d)-h(a)-h(b)-h(c)-h(d)\\
&=f(a+b)+g(a+c)+g(a+d)-f(a)-f(b)-f(c)-f(d)\\
&=\frac{2 a b + (c - 2) (k - c) + (d - 2)(k - d)}{1+k}> 0\text{ since } a+c>k,d\leq k.\end{align*}\\\\
 \bold{case 12)}
 \begin{align*}
&\quad h(a+b)+h(a+c)+h(a+d)-h(a)-h(b)-h(c)-h(d)\\
&=f(a+b)+f(a+c)+g(a+d)-f(a)-f(b)-f(c)-f(d)\\
&=\frac{a (2 b + 2 c + a - 2 - k) + (d - 2) (k - d)}{1+k}> 0\text{ since } b+c>k,d\geq 2, k\geq d.\end{align*}\\\\
 \bold{case 13)}
 \begin{align*}
&\quad h(a+b)+h(a+c)+h(a+d)-h(a)-h(b)-h(c)-h(d)\\
&=f(a+b)+f(a+c)+f(a+d)-f(a)-f(b)-f(c)-f(d)\\
&=\frac{2 a (n - 2 - k)}{1+k}>0.\end{align*}\\\\
 \bold{case 14)}
 \begin{align*}
&\quad h(a+b)+h(a+c)+h(a+d)-h(a)-h(b)-h(c)-h(d)\\
&=f(a+b)+f(a+c)+f(b+c)-f(a)-f(b)-f(c)-f(d)\\
&=\frac{(n-2d)(n-2)}{1+k}>0\text{ since }d\leq k<\floor.\end{align*}\\\\
 \bold{case 15)}
This last case can't actually happen with the assumption $k<\floor$ .\\
This concludes the proof of our lemma.
\end{proof}
We note that in our intersection calculation we got a strictly
positive number in every case except when $a+b+c\leq k$.  This is
exactly what we would expect since those values represent precisely
the curves which get contracted under the map $\rho:\mo\rightarrow
\wo$.
\begin{proof}[Proof of Proposition \ref{main}]\quad\\
By applying Theorem \ref{number} we have proved that $A_{\alpha}$
intersects nonnegatively with all vital curves in the cases when
$\alpha=\frac{2}{k+1}$, $2\leq k\leq \lfloor n/2 \rfloor-1$ and
$n\geq 6$.  Since divisors that are nef are certainly also F-nef,
 we have shown that $A_{\alpha}$ is F-nef for all $\frac{2}{n-1}\leq \alpha \leq 1$, $n\geq 6.$ The result is trivial for $n<6$.

\end{proof}

\section{Partial results}
Certain ranges of $A_{\alpha}$ can be explicitly shown to be nef.

\begin{theorem}\label{nef}
For $\frac{2}{n-1}< \alpha\leq\frac{2}{\lfloor n/2 \rfloor+1},
A_{\alpha}$ is semi-ample.

\end{theorem}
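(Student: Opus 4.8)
The statement concerns the range $\alpha \in (\tfrac{2}{n-1}, \tfrac{2}{\lfloor n/2\rfloor+1}]$, which corresponds to $k = \lfloor n/2 \rfloor$ in the notation of Section 3; by the Note at the end of Section 3 the relevant weighted space is $\wo = (\IP^1)^n /\!/ SL_2$ with symmetric weight $a \le 2/n$, and the map $\rho: \mo \to (\IP^1)^n /\!/ SL_2$ contracts every boundary divisor $D_i$ with $i \ge 3$, leaving only $D_2$. By Theorem 3.? (the pullback theorem) we have $\rho^*(K_{\wo} + \alpha E) = A_\alpha$, so it suffices to show that $K_{\wo} + \alpha E$ is semi-ample on $(\IP^1)^n /\!/ SL_2$, since the pullback of a semi-ample divisor under any morphism is semi-ample. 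I would therefore reduce the whole statement to a positivity computation on the GIT quotient $X := (\IP^1)^n /\!/ SL_2$.

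**Key steps.** First I would identify $K_X + \alpha E$ explicitly on $X$. Since $X$ is a GIT quotient of $(\IP^1)^n$ by $SL_2$ with the symmetric linearization $\mathcal{O}(1,\dots,1)$, its Picard group (rationally) is one-dimensional, generated by the descent $L$ of $\mathcal{O}(1,\dots,1)$, which is ample by construction of the GIT quotient. The boundary $E$ (the locus of curves with a node, i.e. the image of $D_2$) and the canonical class $K_X$ are both proportional to $L$. I would compute the coefficient: using $\rho^*(K_{\wo}+\alpha E) = A_\alpha$ together with the known expression $A_\alpha = K_{\mo} + \sum_{j=2}^{\floor}\big(\binom{j}{2}\alpha - (j-2)\big)D_j$ (all $D_j$ with $j\ge 3$ being $\rho$-exceptional here) and Pandharipande's formula for $K_{\mo}$, I would pull everything back to $\mo$, express in the basis $\{D_j\}$, and match against $\rho^*L$ — whose class on $\mo$ is also computable from Keel/Kapranov's description of the psi-classes and the fact that $\rho^*L = \sum_j c_j D_j + (\text{psi contribution})$. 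The upshot should be a clean formula of the form $K_X + \alpha E = \lambda(\alpha)\, L$ with $\lambda(\alpha) \ge 0$ precisely on the stated range, and $\lambda > 0$ on the open part; an ample divisor is semi-ample, and the zero divisor is trivially semi-ample (its section ring is $k$), which covers the boundary value $\alpha = 2/(n-1)$ where $A_\alpha = 0$ as already noted.

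**Alternative route, and the main obstacle.** If the direct Picard-group computation on the GIT quotient turns out to be awkward (for instance when $n$ is even, $X$ is singular along the image of the strictly-semistable locus, so one must be careful with $\IQ$-Cartier issues and with what "$K_X$" means), I would instead argue on $\mo$ directly: show $A_\alpha$ is semi-ample by exhibiting it as $\rho^*$ of an ample class, using that $\rho$ is a birational morphism with $\rho_*\mathcal{O}_{\mo} = \mathcal{O}_X$ (both are normal projective), so that by Proposition \ref{two} the section ring of $A_\alpha$ equals that of the ample $K_X + \alpha E$ on $X$, forcing $\mathrm{Proj}$ of the section ring to be $X$ and $A_\alpha$ to be semi-ample. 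The one genuine obstacle is verifying that $K_{\wo} + \alpha E$ is actually \emph{ample} (not merely nef) on $X = (\IP^1)^n/\!/ SL_2$ on the half-open range, i.e. that $\lambda(\alpha) > 0$ for $\alpha > 2/(n-1)$ — this requires the explicit coefficient computation above rather than a soft argument, and handling the even-$n$ singular quotient carefully. Once $\lambda(\alpha)>0$ is established, semi-ampleness (indeed ampleness of the image divisor) is immediate and pulls back, so the theorem follows.
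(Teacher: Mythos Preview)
Your approach is essentially the paper's: reduce to the GIT quotient $X=(\IP^1)^n/\!/SL_2$, use that its $\mathfrak{s}_n$-equivariant Picard group is one-dimensional (generated by the image $E$ of $D_2$), and check that $K_X+\alpha E$ is a positive multiple of that ample generator. The paper short-circuits your coefficient computation and your even-$n$ worries by simply pushing forward: since $\rho$ contracts every $D_j$ with $j\ge 3$, one reads off $K_X=\rho_*K_{\mo}=-\tfrac{2}{n-1}E$ from the $D_2$-coefficient in Pandharipande's formula, so $K_X+\alpha E=(\alpha-\tfrac{2}{n-1})E$ is ample for $\alpha>\tfrac{2}{n-1}$ and $A_\alpha=\rho^*(K_X+\alpha E)$ is semi-ample.
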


\begin{proof}
This range corresponds exactly to when our map $\rho$ is of the form
$\rho: \mo\to {\IP}^1/SL_2$.  Because we have contracted all the
rays corresponding to $D_j$, $j\neq $2 of $\mo$, the
$\mathfrak{s}_n$-equivariant Picard group of ${\IP}^1/SL_2$ is one dimensional
and generated by the image of $D_2.$  Also the canonical class is
simply $$K={\rho}_*(K_{\mo})={\rho}_*(\frac{-2}{n-1}D_2+\rho-\text{exceptional divisors})=\frac{-2}{n-1}[\text{image of }D_2].$$  Therefore
$K+\alpha E =(\alpha-\frac{2}{n-1})$[image of $D_2$] is ample for
our range of $\alpha$ which implies $A_{\alpha}$ is the pullback of an
ample divisor as we wanted.

\end{proof}

\begin{theorem}[KM]\label{neff}
$A_{\alpha}$ is ample for $2/3 < \alpha \leq 1$
\end{theorem}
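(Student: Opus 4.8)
The plan is to show that for $2/3 < \alpha \le 1$ the divisor $K_{\mo}+\alpha D$ is already nef and big, so that by Theorem~\ref{one} (Kawamata basepoint freeness) the log canonical model is $\mo$ itself, and to observe that in this range $k \le 2$, so $A_{\alpha} = K_{\mo}+\alpha D$ and the claim $A_{\alpha}$ ample is literally the statement that $K_{\mo}+\alpha D$ is ample. Concretely, when $\alpha > 2/3$ the largest integer $k$ with $\alpha \le 2/(k+1)$ is $k=2$ (since $2/(k+1) < 2/3$ for $k \ge 3$), and for $k \le 2$ Lemma~\ref{two}'s discussion gives $\rho$ an isomorphism and $A_{\alpha} = K_{\mo}+\alpha D$. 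So the whole content is: $K_{\mo} + \alpha D$ is ample for $2/3 < \alpha \le 1$.

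First I would use the formula $K_{\mo} = \sum_{j=2}^{\floor} D_j\big(\tfrac{(j-2)(n-1)-j(j-1)}{n-1}\big)$ from [Pan] to write $K_{\mo}+\alpha D = \sum_{j=2}^{\floor} r_j D_j$ with $r_j = \tfrac{(j-2)(n-1)-j(j-1)}{n-1} + \alpha$. Then, because $\mo$ is a smooth projective variety on which (by the Keel--McKernan results, Theorem~\ref{number}, together with the known equivalence of nef and F-nef in the cases at hand, or directly by Keel--McKernan's own ampleness criterion) ampleness of a boundary divisor $\sum r_j D_j$ can be checked on vital curves, I would reduce to verifying the strict inequalities
\[
r_{a+b}+r_{a+c}+r_{a+d}-r_a-r_b-r_c-r_d > 0
\]
for every partition $a+b+c+d = n$ (with the convention $r_1 = 0$, $r_i = r_{n-i}$). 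This is the Keel--McKernan ampleness criterion: an $\mathfrak{s}_n$-symmetric boundary divisor is ample iff it meets every vital curve positively.

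The main computational step is then to plug the explicit $r_j$ into this expression and show positivity when $\alpha > 2/3$. Writing $q(j) := (j-2)(n-1) - j(j-1)$ so that $r_j = q(j)/(n-1) + \alpha$ (with the boundary reflection and $r_1=0$ handled as in Theorem~\ref{number}), the $\alpha$-terms contribute $\alpha(3 - 4) = -\alpha$ \emph{only in the generic interior case}; one must track carefully how the $r_1 = 0$ and $r_i = r_{n-i}$ conventions modify the count of $\alpha$'s in the degenerate cases (when some of $a, a+b, \dots$ equal $1$ or exceed $n-1$, i.e. the partition is extremal). This bookkeeping is exactly the phenomenon already encountered in the proof of the Lemma in Section~4 (the split into cases according to which sums are $\le k$); here, since $k = 2$, the case analysis collapses dramatically: the only ``small'' sums possible are those equal to $1$ or $2$. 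So I would organize the proof into the few cases determined by how many of $a, b, c, d$ equal $1$, compute the resulting linear-in-$\alpha$ expression in each, and check it is positive on $(2/3, 1]$ — the worst case being $1+1+1+(n-3)$, where the inequality becomes tight at $\alpha = 2/3$ and strict for $\alpha > 2/3$, which is precisely why $2/3$ is the threshold.

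The hard part will be making the reduction to vital curves rigorous without circularity: Theorem~\ref{number} as stated gives F-nefness, and ampleness is a strictly stronger condition. I would resolve this by invoking the Keel--McKernan ampleness criterion (from [KeM]) directly rather than Fulton's conjecture — Keel--McKernan prove unconditionally that an $\mathfrak{s}_n$-invariant divisor on $\mo$ is ample iff it intersects all vital curves positively (this is where the ``[KM]'' attribution in the theorem statement comes from). With that criterion in hand, the rest is the routine positivity check sketched above, and no appeal to Fulton's conjecture is needed in this range.
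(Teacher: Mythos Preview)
The paper does not actually give a proof here: it cites Keel--McKernan for the case $\alpha=1$ and asserts that the same argument works verbatim for $2/3<\alpha\le 1$, since (as you correctly note) $A_\alpha=K_{\mo}+\alpha D$ in this range. Your proposal is therefore already more detailed than what the paper offers, and the overall strategy --- reduce ampleness to strict positivity on vital curves and then compute --- is the intended one.

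There is, however, a genuine gap in your justification of the reduction step. You assert that Keel--McKernan prove unconditionally that an $\mathfrak{s}_n$-invariant divisor on $\mo$ is ample iff it meets every vital curve positively. That statement is precisely the $\mathfrak{s}_n$-equivariant Fulton conjecture; it is \emph{not} a theorem of Keel--McKernan, and is only known for $n\le 24$ (Farkas--Gibney, Gibney). If it were a theorem, the entire paper would be unconditional. What Keel--McKernan do prove --- recorded in the paper as the theorem immediately following this one --- is the extremal-ray result: if $D-G$ is effective and $R$ is an extremal ray of $\bar{NE}_1$ with $(K_{\mo}+G)\cdot R\le 0$, then $R$ is spanned by a vital curve. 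For $G=\alpha D$ with $\alpha\le 1$ this hypothesis is satisfied, and then strict positivity on all vital curves forces $(K_{\mo}+\alpha D)\cdot R>0$ on every extremal ray, hence ampleness by Kleiman. That is the correct bridge, and it is specific to divisors of the form $K_{\mo}+(\text{sub-boundary})$; it is not available for arbitrary $\mathfrak{s}_n$-invariant divisors.

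A smaller computational slip: the partition $1+1+1+(n-3)$ yields intersection $3r_2-r_3=2\alpha-1$, which vanishes at $\alpha=1/2$, not $2/3$. The bound $2/3$ in the statement is just the boundary of the range $k=1$, not the actual threshold for ampleness of $K_{\mo}+\alpha D$.
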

Keel and McKernan actually prove this for $A_1=K_{\mo}+D$.
However $A_{\alpha} =K_{\mo}+\alpha D$ in these cases and the proof generalizes word for word.\\
Keel and McKernan also developed a combinatorial tool that helps
prove in special cases that an F-nef divisor is actually nef.

\begin{theorem}[KeM]
Let R be an extremal ray of the cone $\bar{NE}_1$.  Supposed D is a
Q-divisor with $D-G$ effective and $(K_{\mo}+G).R\leq 0$.  Then $R$
is spanned by a vital curve.

\end{theorem}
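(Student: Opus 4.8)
The plan is to combine the Cone Theorem of the minimal model program with the abundance of fibrations on $\mo$. First I would arrange that the Cone Theorem applies. Since $\mo$ is smooth with simple normal crossing boundary, the pair $(\mo,G)$ is log canonical for every effective $G$ supported on the boundary with coefficients at most $1$, and Kawamata log terminal once all coefficients are $<1$; the hypothesis that $D-G$ is effective is precisely what confines $G$ to this range, with $D$ serving as the reference boundary. Next I would reduce to the case of strict inequality $(K_{\mo}+G)\cdot R<0$: since $K_{\mo}+D$ is ample (this is Theorem \ref{neff} at $\alpha=1$, where $A_{1}=K_{\mo}+D$) one has $(D-G)\cdot R=(K_{\mo}+D)\cdot R-(K_{\mo}+G)\cdot R>0$, so replacing $G$ by a slightly smaller effective sub-boundary $G'$ with $D-G'$ still effective makes the intersection strictly negative, the coefficient bookkeeping being routine. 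The Cone Theorem then yields a rational curve $B$ with $R=\IR_{\geq 0}[B]$ and $0<-(K_{\mo}+G)\cdot B\leq\dim\mo+1$; in particular $B$ has bounded degree against a fixed ample class.

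The heart of the argument would be an induction on $n$ via the forgetful morphisms $\pi_{i}\colon\mo\to\bar{\bo M}_{0,n-1}$ — flat maps whose fibres are universal curves, i.e. trees of $\IP^{1}$'s — and, more generally, the maps that remember an arbitrary subset of the markings. The base case $n=4$ is immediate, as $\bar{\bo M}_{0,4}\cong\IP^{1}$ carries the single curve class $1+1+1+1$, which is vital. For the inductive step I would split according to whether $B$ is contracted by some $\pi_{i}$. If it is, then $B$ is contained in a fibre of $\pi_{i}$, and the class in $N_{1}(\mo)$ of any such curve is a known effective combination of vital classes; extremality of $R=\IR_{\geq 0}[B]$ then forces $R$ to be spanned by a single vital curve. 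If $B$ is contracted by no $\pi_{i}$, then each push-forward $\pi_{i*}B$ spans a ray $R_{i}$ of $\bar{NE}_{1}(\bar{\bo M}_{0,n-1})$; comparing canonical classes under $\pi_{i}$ with $(K_{\mo}+G)\cdot B<0$ gives $(K_{\bar{\bo M}_{0,n-1}}+G_{i})\cdot R_{i}\leq 0$ for an induced sub-boundary $G_{i}$, while extremality of $R$ forces each $R_{i}$ onto an extremal face downstairs, so by induction every $\pi_{i*}B$ is numerically vital. It then remains to reassemble: using Keel's closed embedding $\mo\inject\prod_{|S|=4}\bar{\bo M}_{0,S}\cong(\IP^{1})^{\binom{n}{4}}$ by the cross-ratio maps, the class of $B$ is determined by the intersection numbers $D_{j}\cdot B$ for $2\leq j\leq\floor$, and the requirement that every $\pi_{i*}B$ be vital, together with the F-inequalities of Theorem \ref{number} holding with equality along the face spanned by $R$, pins $[B]$ down to a single vital class.

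The step I expect to be the main obstacle is the second alternative above. One must extract, from the bare fact that $R$ is a $(K_{\mo}+G)$-negative extremal ray, enough control on all of the $\pi_{i*}B$ simultaneously — that they stay negative against the descended sub-boundaries and, crucially, that they stay on extremal faces — and then establish the reconstruction statement: a curve whose image under every forgetful projection is a vital curve, with numerically compatible degrees, is itself numerically vital. That last point is a purely combinatorial lemma about Keel's relations in $A^{\ast}(\mo)$ and the facial structure of the F-cone, and it is here, rather than in general minimal model theory, that the special geometry of $\mo$ does the real work. A secondary technical nuisance is the log canonical versus Kawamata log terminal bookkeeping needed to invoke the Cone Theorem on the nose, which is handled by the perturbation indicated in the first paragraph.
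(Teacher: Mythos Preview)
The paper does not prove this theorem; it is quoted without proof from Keel--McKernan [KeM], and only the two corollaries that follow it are argued in the text. So there is no ``paper's own proof'' against which to compare your sketch.

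That said, your outline is broadly in the spirit of the actual Keel--McKernan argument: perturb to make $(\mo,G)$ klt with $(K_{\mo}+G)\cdot R<0$, invoke the Cone Theorem, and then exploit the forgetful maps and boundary stratification inductively. Two places deserve more care. First, the assertion that ``extremality of $R$ forces each $R_i$ onto an extremal face downstairs'' is not automatic: the pushforward of an extremal ray along a morphism need not be extremal, so this requires a separate argument rather than a one-line appeal. Second, you rightly flag the reconstruction step --- that a curve all of whose forgetful images are numerically vital must itself be vital --- as the crux. In Keel--McKernan this is not handled by pure combinatorics on Keel's relations; they instead analyze the contraction of $R$ (granted by the Contraction Theorem) and its interaction with the boundary stratification, and the argument is substantially more delicate than your sketch indicates. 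In short: the architecture is right, but the load-bearing lemma is not yet in place, and the honest thing is simply to cite [KeM] as the paper does.
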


\begin{corollary}
If $K_{\mo}+G$ as above is F-nef then it is nef.
\end{corollary}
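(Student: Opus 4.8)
The plan is a short contradiction argument: combine the preceding Keel--McKernan theorem with the standard fact that a divisor class which fails to be nef must pair negatively with some \emph{extremal} ray of $\bar{NE}_1$.

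First I would pin down the meaning of ``$K_{\mo}+G$ as above'': we assume $G$ is a $\IQ$-divisor for which there is a $\IQ$-divisor $D$ with $D-G$ effective, so that the hypotheses of the previous theorem are available on any extremal ray along which $K_{\mo}+G$ is nonpositive. (The only subtlety here is bookkeeping: one should check that whatever conditions the cited theorem places on $D$ are genuinely met in the situation of interest; no property of $D$ beyond $D-G$ effective enters the rest of the argument.) Now suppose, for contradiction, that $K_{\mo}+G$ is F-nef but not nef. Since $\mo$ is projective, $\bar{NE}_1$ is a pointed closed convex cone --- it meets the affine hyperplane $\{H=1\}$ in a compact set for any ample $H$ --- hence it is the closed convex hull of its extremal rays. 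If $K_{\mo}+G$ were nonnegative on every extremal ray, it would be nonnegative on their convex hull and, by continuity, on the closure $\bar{NE}_1$; that is, it would be nef, contrary to assumption. So there is an extremal ray $R$ with $(K_{\mo}+G)\cdot R<0$, and in particular $(K_{\mo}+G)\cdot R\le 0$.

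Finally I would apply the previous theorem to this $R$: its hypotheses hold, so $R$ is spanned by a vital curve $C$, whence $(K_{\mo}+G)\cdot C<0$. This contradicts F-nefness of $K_{\mo}+G$, which by definition means nonnegative intersection with every vital curve. Therefore $K_{\mo}+G$ is nef. The only step requiring genuine care is the reduction ``not nef $\Rightarrow$ negative on an extremal ray,'' which rests on $\bar{NE}_1(\mo)$ being a pointed closed cone so that the Minkowski--Weyl/Krein--Milman description applies; granting that, the corollary is a purely formal consequence of the quoted theorem and the definition of F-nef.
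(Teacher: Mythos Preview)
Your argument is correct and follows the same route as the paper: assume non-nefness, produce an extremal ray with negative intersection, apply the preceding Keel--McKernan theorem to get a vital curve, and contradict F-nefness. The paper's proof is a two-line version that silently takes for granted the reduction ``not nef $\Rightarrow$ some extremal ray $R$ has $(K_{\mo}+G)\cdot R<0$''; you spell this step out via the compact cross-section/Krein--Milman argument, which is a reasonable addition since the theorem being invoked really does require $R$ to be extremal.
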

\begin{proof}
Otherwise, we'd have some R with $(K_{\mo}+G).R< 0$.  But that would
imply that $K_{\mo}+G$ intersects a vital curve negatively which is
a contradiction.

\end{proof}

\begin{corollary}
$A_{\alpha}$ is nef for $\alpha\geq \frac{1}{3}.$

\end{corollary}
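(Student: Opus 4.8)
The plan is to recognise the Corollary as an immediate application of the Keel--McKernan criterion stated just above, the only real work being a coefficient check. Write $A_{\alpha}=K_{\mo}+G_{\alpha}$, so that by the Definition
\[
G_{\alpha}=\sum_{j=2}^{k}\left(\binom{j}{2}\alpha-(j-2)\right)D_j+\sum_{j=k+1}^{\floor}\alpha D_j ,
\]
with $k$ as in that Definition. By Proposition~\ref{main} the divisor $A_{\alpha}$ is already known to be F-nef for all $n$ and all $\alpha$, so by the preceding corollary (``if $K_{\mo}+G$ as above is F-nef then it is nef'') it suffices to check that $G_{\alpha}$ satisfies the hypotheses of the Keel--McKernan theorem: that $G_{\alpha}$ is an effective $\IQ$-divisor dominated by an effective boundary, so that $(\mo,G_{\alpha})$ is log canonical and one may take $D=\sum_j D_j$ with $D-G_{\alpha}$ effective. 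The constraint $\alpha\ge\tfrac13$ is precisely what makes this work, and nothing else is needed.

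For the verification I would first reduce to $\tfrac13\le\alpha\le\tfrac23$, since the range $\alpha>\tfrac23$ is covered by Theorem~\ref{neff}; in the remaining range $\alpha\ge\tfrac13$ already forces $k\le 5$. Then I would check the two bounds on each coefficient $c_j$ of $G_{\alpha}$. The upper bound is unconditional: for $j>k$ one has $c_j=\alpha\le\tfrac23<1$, and for $2\le j\le k$ the inequality $\alpha\le\tfrac{2}{k+1}\le\tfrac{2}{j+1}$ gives
\[
c_j=\binom{j}{2}\alpha-(j-2)\ \le\ \frac{j(j-1)}{j+1}-(j-2)\ =\ \frac{2}{j+1}\ <\ 1 .
\]
For the lower bound, $c_j\ge 0$ is clear for $j>k$, and for $2\le j\le k$ it is equivalent to $\alpha\ge\tfrac{2(j-2)}{j(j-1)}$; the right-hand side is maximised over $j\ge 2$ at $j=3$ and $j=4$, where it equals $\tfrac13$, so (with $k\le 5$, the thresholds for $j=2,3,4,5$ being $0,\tfrac13,\tfrac13,\tfrac3{10}$) the hypothesis $\alpha\ge\tfrac13$ gives $c_j\ge 0$ for every $j\le k$. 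Hence $G_{\alpha}$ is an effective boundary with $D-G_{\alpha}$ effective, so $G_{\alpha}$ is ``as above'' and the preceding corollary applies: $A_{\alpha}=K_{\mo}+G_{\alpha}$ is nef.

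Spelled out, if $A_{\alpha}$ were not nef there would be an extremal ray $R$ of $\bar{NE}_1$ with $(K_{\mo}+G_{\alpha})\cdot R<0$, which by the Keel--McKernan theorem would be spanned by a vital curve, contradicting the F-nefness of $A_{\alpha}$; so $A_{\alpha}$ is nef for all $\alpha\ge\tfrac13$. I expect the only point requiring care to be the coefficient bookkeeping — confirming that $\tfrac13$ is genuinely the sharp threshold for this method (just below it the coefficient of $D_3$, or of $D_4$, becomes negative and $G_{\alpha}$ fails to be effective), and disposing of the boundary situations, namely $\alpha=\tfrac13$ with $k=5$ and the small-$n$ or large-$\alpha$ cases where $k\in\{1,2\}$ and $G_{\alpha}=\alpha D$; each of these is straightforward. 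No new geometry is needed beyond Proposition~\ref{main} and the Keel--McKernan machinery already recalled.
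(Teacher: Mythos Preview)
Your argument is correct and uses the same engine as the paper: the F-nefness of $A_{\alpha}$ from Proposition~\ref{main} together with the Keel--McKernan extremal-ray criterion. The difference is that the paper introduces an auxiliary scaling constant $c>0$, writes $cA_{\alpha}=K_{\mo}+B$, and then searches for a value of $c$ making every coefficient of $B$ lie in $[0,1]$; the resulting system of inequalities depends on $n$, is analysed only at the discrete values $\alpha=2/(k+1)$ with $n>13$, and is shown to be solvable precisely for $k\le 5$. You instead take $c=1$ outright, so that $B=G_{\alpha}=A_{\alpha}-K_{\mo}$, and check the bounds on the coefficients directly. This is a genuine simplification: the inequalities become independent of $n$, every $\alpha$ in the interval is handled at once rather than by interpolation, and the threshold $\tfrac13$ appears transparently as the point where the $D_3$- and $D_4$-coefficients $3\alpha-1$ and $6\alpha-2$ first become negative. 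Both routes terminate at the same bound $k\le 5$, so nothing is lost by avoiding the extra parameter.
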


Let $\alpha=\frac{2}{k+1}$ and $n >
 13$.
\begin{proof}

It is enough to show that some positive multiple of $A_{\alpha}$ is
nef. Let $c>0$ and set $B=cA_{\alpha}-K_{\mo}=$
$$\sum_{j=1}^{k}D_j\left(\genfrac(){0cm}{0}j 2 c \alpha
-(j-2)-\frac{j(j-1)}{n-1}\right)+\sum_{j=k+1}^{\lfloor
n/2\rfloor}D_j\left( c(j-2+\alpha)-(j-2)-\frac{j(j-1)}{n-1}\right)$$
If we can find a value of $c$ so that each coefficient is between
zero and one, then we can apply
the proposition.\\

By examining the lower terms we immediately are forced to have the
inequalities:

$$c\geq\frac{k+1}{n-1}+\frac{k+1}{6}\text{ \quad and \quad }
c\leq\frac{k+1}{n-1}+\frac{k+1}{k}.$$

Therefore this only has a chance of satisfying our condition for
$k\leq 6$.\\
 Similarly we get the following inequality from bounds on
the upper terms:
$$c\geq\frac{(\floor-2)(n+\floor)+2}{(n-1)(\floor-2+\alpha)}\text{ \quad and \quad
}c\leq\frac{k(n-k)}{(n-1)(k-1+\alpha)}.$$

For $k\leq 6$ we can simply plug $k$ into these 4 inequalities and
we see that they are all satisfied for $k\leq 5$ but not for $k=6$.

For higher values of $k$ we may at least hope to make all terms
positive and some terms $\leq 1$.  We can do this, but the
inequalities still force most coefficient of the $D_j$'s to be quite
large.
\end{proof}

\section{The Fulton Cone}

 One way to prove that our
$A_{\alpha}$'s are nef would be to prove Fulton's conjecture.  We of
course don't need quite so strong a result and in fact, from
computational data, it appears that at least some of the
$A_{\alpha}$'s sit in a very nice part of the Fulton cone of F-nef
divisors.  Although the major combinatorial techniques don't appear
to be of use, it would be nice if this corner of the Fulton cone
could be proven to be nef by geometric means.

\begin{notation}
Let W=$\IP(\IQ^{\floor-1})$ be the projectivized cone of
$\mathfrak{s}_n$-equivariant divisors of $\bar{\bo{M}}_{0,n}$ with standard
basis $D_2,\dots D_{\floor}$ and corresponding coordinates
$(r_2\ldots r_{\floor})$.  Let $F$ be the subcone of F-nef divisors.

For each sum $a+b+c+d=n$ we have an associated hyperplane in $W$. We
will denote the 'special' hyperplanes corresponding to
$1+1+i+(n-i-2)=n$ by $V_i$.  Other hyperplanes will
simply be denoted $V(a,b,c,d)$.
\end{notation}
 We will be studying the portion of $F$ which contains $A_{\alpha}$ for small $\alpha.$

\begin{proposition}\label{face}
For \small$2\leq k\leq \floor$,\normalsize the minimal face of $F$ containing $A_{\frac{2}{k+1}}$ is $$F_{k}=V_1\intersect V_2\intersect\dots\intersect V_{k-2}\intersect F\subset W.$$  In particular $A_{\frac{2}{k+1}}$ is in the interior of a face of (projective) dimension $\floor-k$ whose boundary contains $A_{\alpha}$ for $\alpha \leq \frac{2}{k+2}.$

\end{proposition}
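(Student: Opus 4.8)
The plan is to prove the two inclusions showing $F_k = V_1 \cap \dots \cap V_{k-2} \cap F$ is exactly the minimal face containing $A_{2/(k+1)}$. First I would write out the coefficients $r_j$ of $A_{\alpha}$ explicitly from the definition: for $j \le k$ one has $r_j = \binom{j}{2}\alpha - (j-2) + (\text{the }K_{\mo}\text{-coefficient})$, and for $j > k$ one has $r_j = \alpha + (\text{the }K_{\mo}\text{-coefficient})$; concretely these are the values $h(j)$ from the Lemma with $\alpha = 2/(k+1)$. The key computational input is already in hand: the proof of the Lemma records that the intersection number $r_{a+b}+r_{a+c}+r_{a+d}-r_a-r_b-r_c-r_d$ is \emph{strictly} positive for every partition $a+b+c+d=n$ \emph{except} those with $a+b+c \le k$, where it is exactly $0$ (cases 9 and 15). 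So the face of $F$ on which $A_{2/(k+1)}$ lies is cut out precisely by the hyperplanes $V(a,b,c,d)$ with $a+b+c\le k$, i.e. with $d \ge n-k$.

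Next I would show these vanishing hyperplanes are all supported on (equivalently, have the same intersection with $F$ as) the special hyperplanes $V_1,\dots,V_{k-2}$. The partition $1+1+i+(n-i-2)$ gives $V_i$, and its defining functional on a symmetric divisor $\sum r_j D_j$ is $r_{i+1}+r_{i+1}+r_{n-i-1} - r_1 - r_1 - r_i - r_{n-i-2} = 2r_{i+1} - r_i - r_{i+2}$ (using $r_1=0$ and $r_j = r_{n-j}$), a discrete second-difference. So $A_{2/(k+1)} \in V_i$ says $h$ is ``linear'' at $i+1$, i.e. $2h(i+1)=h(i)+h(i+2)$; for $i=1,\dots,k-2$ all arguments $i,i+1,i+2$ lie in $\{1,\dots,k\}$ where $h=f$ is a quadratic in $j$ with the specific leading behavior making the second difference vanish exactly when $\alpha=2/(k+1)$ — this is the same identity appearing in cases 9/15. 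Conversely I must check that the general vanishing partitions $d\ge n-k$ impose no condition on $F$ beyond $V_1\cap\dots\cap V_{k-2}$: this is a linear-algebra statement about the span of the functionals, which I would argue either by directly expressing each such functional as a nonnegative combination of the $V_i$-functionals restricted to the relevant coordinate subspace, or by invoking Keel–McKernan's description to see that the extremal rays of $F$ lying in $V_1\cap\dots\cap V_{k-2}$ are exactly the vital curves with $a+b+c\le k$, so no further hyperplane can cut the face down.

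For the dimension count and the boundary claim, note that $W$ has projective dimension $\floor - 1$, each $V_i$ is a hyperplane, and the functionals for $V_1,\dots,V_{k-2}$ are linearly independent (their matrix in the $r_j$-basis is the banded second-difference matrix, manifestly of rank $k-2$), so $V_1\cap\dots\cap V_{k-2}$ has projective dimension $\floor - 1 - (k-2) = \floor - k + 1$; intersecting with the full-dimensional cone $F$ and passing to the relative interior where $A_{2/(k+1)}$ lives (by the strict positivity in all non-degenerate cases) gives a face of projective dimension $\floor - k$. Finally, for $\alpha \le 2/(k+2)$ the integer $k$ in the definition of $A_\alpha$ is $\ge k+1$, so $A_\alpha$ satisfies $V_{k-1}$ as well and hence lies on the boundary of $F_k$. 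The main obstacle is the converse linear-algebra step — verifying that the ``generic'' vanishing hyperplanes $V(a,b,c,d)$ with $a+b+c \le k$ genuinely add nothing to the face beyond the special ones $V_1,\dots,V_{k-2}$; everything else is either a direct reading-off of the Lemma's case analysis or an elementary rank computation.
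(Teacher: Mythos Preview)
Your overall strategy matches the paper's: identify the minimal face as $\bigcap_{a+b+c\le k} V(a,b,c,d)$ using the case analysis of the Lemma, then show this intersection equals $V_1\cap\dots\cap V_{k-2}\cap F$. But your execution has a few errors and, more importantly, your self-identified ``main obstacle'' has a much cleaner resolution than the two routes you sketch.

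First, two computational slips. The functional for $V_i=V(1,1,i,n-i-2)$ is $r_{2}+r_{i+1}+r_{n-i-1}-r_1-r_1-r_i-r_{n-i-2}=r_2+2r_{i+1}-r_i-r_{i+2}$, not $2r_{i+1}-r_i-r_{i+2}$: you dropped the $r_2=r_{a+b}$ term. So the condition is not that the second difference of $h$ vanishes, but that it equals $r_2$; and this holds for \emph{every} $\alpha$ on the range where $h=f$, not only for $\alpha=2/(k+1)$. Second, $W=\mathbb{P}(\mathbb{Q}^{\lfloor n/2\rfloor-1})$ has projective dimension $\lfloor n/2\rfloor-2$, so $V_1\cap\dots\cap V_{k-2}$ already has projective dimension $\lfloor n/2\rfloor-k$; intersecting with the full-dimensional cone $F$ does not drop dimension, and your ad hoc ``$-1$'' is not justified.

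For the converse step, neither of your suggestions is quite right. Writing the $V(a,b,c,d)$-functional as a \emph{nonnegative} combination of the $V_i$-functionals is stronger than needed and not obviously true; and the Keel--McKernan route you sketch (determining the extremal rays of $F$ inside $V_1\cap\dots\cap V_{k-2}$) is essentially circular here. The paper's argument is direct: the recurrence $r_{i+2}=r_2+2r_{i+1}-r_i$ for $i\le k-2$ forces $r_i=\binom{i}{2}$ (up to the overall scaling fixing $r_2$) for $2\le i\le k$, with $r_i$ free for $i>k$. Then for any partition with $a+b+c\le k$ (so $d=n-(a+b+c)$ and $r_d=r_{a+b+c}$), the $V(a,b,c,d)$-functional becomes
\[
\binom{a+b}{2}+\binom{a+c}{2}+\binom{b+c}{2}-\binom{a}{2}-\binom{b}{2}-\binom{c}{2}-\binom{a+b+c}{2}=0,
\]
a binomial identity. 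This shows every such hyperplane already contains $V_1\cap\dots\cap V_{k-2}$, and the dimension follows immediately as $(\lfloor n/2\rfloor-2)-(k-2)=\lfloor n/2\rfloor-k$.
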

Here $F_2=F$ and $F_{\floor}$ is a (projective) vertex of $F$ equal to $A_{\frac{2}{\floor+1}}$ (which is in fact nef unconditionally by Theorem \ref{nef}).
\begin{proof}
By the proof of Theorem \ref{main} $A_{\frac{2}{k+1}}$ contracts precisely vital curves corresponding to partitions $a+b+c+d=n$ such that $a+b+c\leq k$.  Therefore the minimal face of $F$ containing $A_{\frac{2}{k+1}}$ is cut out by $\bigcap_{a+b+c\leq k}V(a,b,c,d).$\\

\textbf{Claim:}
$$\bigcap_{a+b+c\leq k}V(a,b,c,d)=V_1\intersect V_2\intersect\dots\intersect V_{k-2}.$$
We certainly have $\bigcap_{a+b+c\leq k}V(a,b,c,d)\subset V_1\intersect V_2\intersect\dots\intersect V_{k-2}$.  To prove the other inclusion we give an explicit description of $V_1\intersect V_2\intersect\dots\intersect V_{k-2}.$

$V_1\intersect V_2\intersect\dots\intersect V_{k-2}$ is defined by equations $r_2+2r_{i+1}-r_i-r_{i+2}=0$, or $r_{i+2}=r_2+2r_{i+1}-r_i$ for $i\leq k-2.$  The only solutions up to multiplication by a constant for this recurrence relation are $r_i=\genfrac(){0cm}{0}j 2$ for $2\leq i\leq k$ and $r_i$ free for $k<i\leq\floor$.

 Now for $a+b+c\leq k\leq\floor$,  $V(a,b,c,d)$ has defining equation $r_{a+b}+r_{a+c}+r_{b+c}-r_a-r_b-r_c-r_{a+b+c}=0$. To prove the claim we need only show that any such equation is satisfied by all points of $W$ with $r_i=\genfrac(){0cm}{0}j 2$ for $ i\leq k$.  This condition is equivalent to ${\genfrac(){0cm}{0}{a+b} 2}+{\genfrac(){0cm}{0}{a+c} 2}+{\genfrac(){0cm}{0}{b+c} 2}-{\genfrac(){0cm}{0}{a} 2}-{\genfrac(){0cm}{0}{b} 2}-{\genfrac(){0cm}{0}{c} 2}-{\genfrac(){0cm}{0}{a+b+c} 2}=0$ which is always true.  This proves our claim.

 The claim immediately implies the first statement of the proposition.  Additionally, we showed in the proof of the claim that the dimension of $V_1\intersect V_2\intersect\dots\intersect V_{k-2}$ is $(\floor-2)-(k-2)=\floor-k.$  Since each $A_{\alpha}$ is in $F$, this completes the proof.

\end{proof}
The following is a description of the faces $F_{\floor-1}$ and $F_{\floor-2}$.  Specifically, for each $n$, we make a table with the following information: the first column lists the hyperplanes which cut out the vertices of the (1-dimensional) face $F_{\floor-1}$, the second column describes the shape of $F_{\floor-2}$ and the last column lists the hyperplanes which cut out edges of $F_{\floor-2}$.\\\\
\begin{tabular}{| l | l | l | l |}
\hline n  &  facet inducing & \# of facets  &
 facet inducing \\
   & hyperplanes of $F_{\floor-1}$ & of $F_{\floor-2}$ &
  hyperplanes of $F_{\floor-2}$

   \\ \hline 6   &  $V_1$ and $V_2$  & & \\
\hline 7   & $V_1$ and $V(1,2,2,2)$ & & \\ \hline 8   & $V_2$ and
$V_3$ &4 & $V_1, V_2, V_3, V(2,2,2,2)$ \\ \hline 9   & $V_2$ and
$V_3$ &4 & $V_1, V_2, V_4, V(2,2,2,3)$ \\ \hline 10   & $V_3$ and
$V_4$ &4 & $V_2, V_3, V_4, V(1,3,3,3)$ \\ \hline 11   & $V_3$ and
$V_4$ &4 & $V_2, V_3, V_4, V(2,3,3,3)$ \\ \hline 12   & $V_4$ and
$V_5$ &3 & $V_3, V_4, V_5$ \\ \hline 13   & $V_4$ and $V_5$ &4 &
$V_3, V_4, V_5, V(1,4,4,4)$ \\ \hline $n=14,\dots$   &
$V_{\lfloor{n/2}\rfloor-2}$, $V_{\lfloor n/2\rfloor-1}$ &3 &
$V_{\lfloor n/2\rfloor-3}, V_{\lfloor n/2\rfloor-2} ,V_{\lfloor
n/2\rfloor-1}$ \\ \hline
\end{tabular}
\begin{example}
For $n=8$, the faces $F_{\floor}$ and $F_{\floor-1}$ have already stabilized into the form that all such faces have for large $n$. $F_{\floor-2}$, on the hand, has not yet stabilized into the expected form of a 3-simplex.

\begin{figure}[h]
  \begin{center}
    \includegraphics{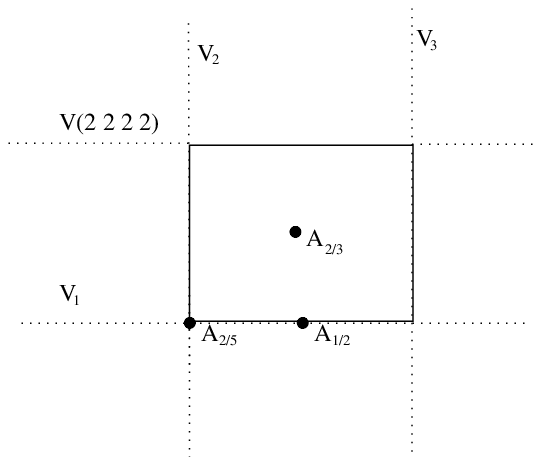}
  \end{center}
  \caption{\quad The cone $F$ for ${\bar{\bo{M}}_{0,8}}$}
\end{figure}

\end{example}
 The structure of vertices, edges, etc. of the higher
dimensions faces $F_{\floor-t}$ stabilize into a
similar pattern. In fact we can make this more precise.\\
\begin{theorem}[Version 1]
The Fulton cone is stably simplicial in the neighbor of the
$A_{\alpha}$'s in the sense that, given $t$,  $F_{\floor-t}$ is a simplex for $n>>0$.
\end{theorem}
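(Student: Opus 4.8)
The plan is to analyze the facet structure of $F_{\floor - t}$ for fixed $t$ as $n \to \infty$ and show that, once $n$ is large enough, exactly $\floor - t + 1$ hyperplanes among the $V(a,b,c,d)$ restrict nontrivially to $F_{\floor - t}$, and that these cut out a simplex. By Proposition \ref{face}, $F_{\floor - t}$ is the intersection $V_1 \cap V_2 \cap \dots \cap V_{\floor - t - 2} \cap F$, and the proof of that proposition already pins down the linear span: points $(r_i)$ with $r_i = \binom{i}{2}$ for $i \leq \floor - t$ and the remaining coordinates $r_{\floor - t + 1}, \dots, r_{\floor}$ free, a space of projective dimension $t$. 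So I would work inside this $t$-dimensional ambient linear space $W_t$ and ask which of the inequalities $r_{a+b} + r_{a+c} + r_{b+c} - r_a - r_b - r_c - r_{a+b+c} \geq 0$ remain active (i.e.\ are not identically satisfied) when restricted to $W_t$.

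First I would record which partitions give nontrivial restrictions. From the claim in the proof of Proposition \ref{face}, any partition with $a + b + c \leq \floor - t$ gives an identity on $W_t$; so the active constraints are those with $a + b + c > \floor - t$, equivalently $d < n - (\floor - t) = \lceil n/2 \rceil + t$. I would substitute $r_i = \binom{i}{2}$ wherever the index is $\leq \floor - t$ into the facet inequality and collect terms; the surviving linear functional on the free coordinates $r_{\floor - t + 1}, \dots, r_{\floor}$ depends only on how many of $a+b, a+c, b+c, a+b+c$ exceed $\floor - t$ and by how much. For $n$ large relative to $t$, the small parts $a, b, c$ of any active partition are bounded (since three of the four parts must be "small" in a precise sense once $d$ is near $n/2$), and the functionals that arise should, after this reduction, be expressible in terms of a bounded list of "shapes" — this is exactly the stabilization phenomenon visible in the table, where for $n \geq 14$ the facet-inducing hyperplanes of $F_{\floor - 2}$ are always $V_{\floor - 3}, V_{\floor - 2}, V_{\floor - 1}$, independent of $n$. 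I would then show that the number of distinct supporting functionals stabilizes to exactly $t + 1$ (the $V_{\floor - t - 1}, \dots, V_{\floor - 1}$ together with one more coming from a partition like $(1, \floor - t, \dots)$ or $(2,2,\dots,2)$ depending on parity), and that these $t+1$ functionals on a $t$-dimensional cone are in "general position" — any $t$ of them are linearly independent and the cone they bound is a simplex. The non-negativity/pointedness needed to conclude it is a genuine simplex (a bounded cross-section) follows because $A_{2/(\floor - t + 1)}$ lies in the relative interior, giving a strictly feasible point, combined with the fact that $F \subset W$ is already known to be a pointed cone.

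The main obstacle I expect is the bookkeeping that shows \emph{no further} facets appear for large $n$: one must verify that every active partition's restricted inequality is a non-negative combination of the $t+1$ claimed ones, rather than cutting a new face. Concretely, after the substitution $r_i = \binom{i}{2}$ for small indices, each facet functional becomes a fixed integer combination of the free coordinates, and I would need a uniform argument (an explicit convexity/interpolation identity among binomial coefficients, in the spirit of the identity $\binom{a+b}{2} + \binom{a+c}{2} + \binom{b+c}{2} - \binom{a}{2} - \binom{b}{2} - \binom{c}{2} - \binom{a+b+c}{2} = 0$ used in Proposition \ref{face}) showing these all lie in the cone generated by the $t+1$ extremal ones. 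A clean way to organize this is to parametrize the free coordinates by their \emph{second differences} $s_i = r_{i+1} - 2 r_i + r_{i-1}$ for $i$ near $\floor$; in these coordinates the special hyperplanes $V_j$ become coordinate hyperplanes $s_{j+1} = 0$, the cone $F_{\floor - t}$ becomes (a cross-section of) the positive orthant $\{ s_i \geq 0 \}$ up to the finitely many remaining partition constraints, and one checks those remaining constraints are redundant for $n \gg 0$. Once the redundancy is established the simplex claim is immediate: a pointed $t$-dimensional cone with exactly $t+1$ facets is a simplicial cone, so its projectivization is a $t$-simplex.
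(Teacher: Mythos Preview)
Your approach is the dual of the paper's: you propose to enumerate the \emph{facets} of $F_{\floor-t}$ and show that all but $t+1$ of the partition inequalities become redundant on the linear span $W_t$, whereas the paper instead computes the \emph{vertices} of the candidate simplex and checks directly that each one is F-nef. Concretely, the paper defines the ``F-simplex'' cut out by $V_1,\dots,V_{\floor-1}$, writes its vertices $p_{k}=\bigcap_{i\neq k-1}V_i$ explicitly as
\[
p_k \;=\; -K_{\mo} + \sum_{i=2}^k (i-2)\,D_i + \sum_{i=k+1}^{\floor}(k-2)\,D_i,
\]
and then verifies by a short three-case analysis of partitions $a+b+c+d=n$ that $p_k\cdot C\ge 0$ for every vital curve $C$ whenever $k\ge n/3$. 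Since Proposition~\ref{face} already gives $F_k\subseteq$ (face of the F-simplex spanned by $p_k,\dots,p_{\floor}$), F-nefness of these vertices yields the reverse inclusion and hence equality, with the sharp explicit bound $k\ge n/3$ (equivalently $n\ge 3(\floor-t)$ for fixed $t$) rather than an unspecified ``$n\gg 0$''.

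Your outline could in principle be pushed through, but two points need repair. First, the count is off: the hyperplanes $V_{\floor-t-1},\dots,V_{\floor-1}$ already number $t+1$, and for $n$ large these are \emph{exactly} the facets---there is no ``one more'' coming from a non-special partition (compare the paper's table: for $n\ge 14$ the facets of $F_{\floor-2}$ are $V_{\floor-3},V_{\floor-2},V_{\floor-1}$ and nothing else). Second, the redundancy step you flag as the main obstacle is precisely the content of the theorem, and your second-difference reformulation does not diagonalize as cleanly as you suggest: the equation of $V_i$ is $r_{i+2}-2r_{i+1}+r_i=r_2$, not $s_{i+1}=0$, so after restricting to $W_t$ you still have an affine shift to track. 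The vertex-verification route sidesteps this bookkeeping entirely, which is why the paper prefers it; if you want to salvage your facet approach, the cleanest fix is to observe that showing every partition inequality is nonnegative at each $p_k$ is logically equivalent to the redundancy you seek, and then you are back to the paper's computation.
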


 \begin{definition}
 By the \emph{F-simplex} we denote the simplex in $W$ induced by $V_1,\dots,V_{\floor-1}.$  Since the equations for these hypersurfaces are linearly independent this does indeed make a simplex.  We denote the vertices by $p_{k+1}=\bigcap_{i\neq k}V_i.$
 \end{definition}

By Proposition \ref{face} we know that $F_k$ is contained in the face of the F-simplex cut out by $V_1\intersect V_2\intersect\dots\intersect V_{k-2}$.  Such a face is of course again a simplex of smaller dimension.  To prove the theorem we will show that this simplex is actually equal to $F_{k}$ for sufficiently large $k$.  It is enough to prove that the vertices $p_k$ are $F-nef$ for large $k$.

\begin{lemma}
The point $p_{k}$ is represented by the divisor \begin{align*}&\frac{2}{n-1}\left[\sum_{i=2}^k {\genfrac(){0cm}{0}{i} 2} D_i+\sum_{i=k+1}^{\floor}\left( {\genfrac(){0cm}{0}{i} 2}-(i-k)\left(\frac{n-1}{2}\right)\right)D_i\right]\\
&=-K_{\bar{\bo{M}}_{0,n}}+ \sum_{i=2}^k
(i-2)D_i+\sum_{i=k+1}^{\floor}(k-2)D_i\end{align*}
\end{lemma}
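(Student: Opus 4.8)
The plan is to verify the two displayed formulas directly from the definition $p_k=\bigcap_{i\neq k}V_i$, using the explicit description of the hyperplanes $V_i$ and the recurrence already solved in the proof of Proposition \ref{face}. First I would recall that $p_k$, lying on $V_1\cap\dots\cap V_{k-2}$, has coordinates $r_i=\binom{i}{2}$ for $2\le i\le k$ by the recurrence $r_{i+2}=r_2+2r_{i+1}-r_i$ solved there; it also lies on $V_{k+1}\cap\dots\cap V_{\floor-1}$. The remaining hyperplanes $V_k$ is the one omitted. I would then extract from the conditions ``$p_k\in V_i$ for $i\ge k+1$'' a second recurrence governing $r_i$ for $i\ge k$: each $V_i$ corresponds to the partition $1+1+i+(n-i-2)=n$ and hence (by Theorem \ref{number}, with $r_1=0$, $r_j=r_{n-j}$) to the equation $r_2+2r_{i+1}-r_i-r_{i+2}=0$ again, i.e.\ the same linear recurrence, but now valid in the range $k\le i$. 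Since the recurrence is second order, its solution on $i\ge k$ is determined by the two ``initial values'' $r_k=\binom{k}{2}$ and $r_{k-1}=\binom{k-1}{2}$, but those are the values coming from the \emph{first} recurrence; the point is that on $[k,\floor]$ we are free to drop the $+r_2$ inhomogeneity only at $i=k-1$, which is exactly the slot corresponding to the omitted hyperplane $V_k$. Concretely: imposing all $V_i$ with $i\ne k$ forces $r_i=\binom{i}{2}$ for $i\le k$ and forces the affine-linear-in-$i$ correction $r_i=\binom{i}{2}-(i-k)\tfrac{n-1}{2}$ for $i\ge k$, and I would check this candidate satisfies $r_2+2r_{i+1}-r_i-r_{i+2}=0$ for every $i\ne k$ and fails it (as it must, since $p_k$ is a vertex) at $i=k$.

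Granting the first displayed formula, the second is a purely formal rewriting using Pan's formula (the Lemma attributed to [Pan]): $K_{\mo}=\sum_{j=2}^{\floor}\left(\tfrac{(j-2)(n-1)-j(j-1)}{n-1}\right)D_j$, so that $-K_{\mo}=\sum_{j=2}^{\floor}\left(\tfrac{j(j-1)}{n-1}-(j-2)\right)D_j=\sum_{j=2}^{\floor}\left(\tfrac{2}{n-1}\binom{j}{2}-(j-2)\right)D_j$. Subtracting this from the first displayed expression, the $\tfrac{2}{n-1}\binom{i}{2}$ terms cancel term by term, and what remains in the coefficient of $D_i$ is: $(i-2)$ for $2\le i\le k$ (matching the claimed $(i-2)$), and $-(i-k)\cdot\tfrac{2}{n-1}\cdot\tfrac{n-1}{2}+(i-2)=-(i-k)+(i-2)=k-2$ for $k+1\le i\le\floor$ (matching the claimed $(k-2)$). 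So the identity $p_k=-K_{\mo}+\sum_{i=2}^k(i-2)D_i+\sum_{i=k+1}^{\floor}(k-2)D_i$ follows by linear algebra once the first formula is established, and no normalization constant is lost because $W$ is a projective cone and we may fix the representative by its leading behavior.

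The main obstacle is bookkeeping rather than ideas: one must carefully track which of the hyperplanes $V_1,\dots,V_{\floor-1}$ actually impose an independent condition on the slice already cut out, handle the symmetry identification $r_j=r_{n-j}$ consistently (in particular near $j\approx n/2$), and confirm that the ``omitted'' hyperplane is genuinely $V_k$ and not an accidental coincidence with some $V(a,b,c,d)$. The cleanest way to close this is to observe that $V_1,\dots,V_{\floor-1}$ have linearly independent defining equations (this is asserted in the definition of the F-simplex), so the intersection of any $\floor-2$ of them is a single projective point, and it then suffices to exhibit \emph{one} divisor class satisfying the $\floor-2$ equations $\{r_2+2r_{i+1}-r_i-r_{i+2}=0: i\ne k\}$ — which is precisely the verification of the candidate formula sketched above, a one-line substitution into each equation. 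I would present it in that order: state the recurrence, write down the candidate, substitute, then do the Pan's-formula rewrite.
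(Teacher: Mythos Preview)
Your approach is essentially the paper's: use the linear recurrence coming from the hyperplanes $V_i$ to pin down the coordinates of the vertex, then rewrite via Pandharipande's formula for $K_{\mo}$. The rewrite in your second paragraph is correct and matches the paper exactly.

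There is, however, a genuine indexing slip in the first paragraph. By the paper's convention $p_{k+1}=\bigcap_{i\ne k}V_i$, so $p_k=\bigcap_{i\ne k-1}V_i$: the omitted hyperplane is $V_{k-1}$, not $V_k$. Your listing ``$V_1\cap\dots\cap V_{k-2}$ together with $V_{k+1}\cap\dots\cap V_{\floor-1}$'' drops \emph{two} hyperplanes, and your claim that the candidate ``fails the recurrence at $i=k$'' is false: a direct substitution shows it satisfies $V_k$ and fails precisely $V_{k-1}$. Relatedly, the free parameter in the second recurrence is $r_{k+1}$ (not determined by $r_{k-1},r_k$, since the link $V_{k-1}$ is the one omitted), and what fixes it is the top equation $V_{\floor-1}$, which under the identification $r_j=r_{n-j}$ reads $r_{n/2}=r_{n/2-1}-\tfrac12$ (even $n$) or $r_{\floor}=r_{\floor-1}-1$ (odd $n$). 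The paper makes this explicit: it solves the recurrence on $2\le i\le k$ and on $k+1\le i\le\floor$ separately, leaving $r_{k+1}$ free, and then plugs into the boundary equation to obtain $r_{k+1}=\binom{k+1}{2}-\tfrac{n-1}{2}$, from which the stated formula follows. Your ``write down the candidate and substitute'' variant is fine once you correct the omitted index and include the boundary check; as written, the argument does not close because the source of the constant $\tfrac{n-1}{2}$ is never accounted for.
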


\begin{proof}[Proof of Lemma]
The equation of $V_i$ for $i\leq\floor-2$ is $$r_{i+2}=r_2+2r_{i+1}-r_i.$$  Ignoring $V_{\floor-1}$ for the moment, we can solve these equations for \quad\quad\quad\quad \quad $2\leq i\leq k-2, k\leq i\leq \floor-2$.  This gives us: $$r_i={\genfrac(){0cm}{0}{i} 2}\text{ for }i\leq k;$$  $$r_i=\left({\genfrac(){0cm}{0}{i-k} 2}+(i-k)r_{k+1}-(i-k-1){\genfrac(){0cm}{0}{k} 2}\right)\text{ for }k+1\leq i\leq \floor.$$  Depending on the parity of $n$, $V_{\floor-1}$ is cut out by $r_{n/2}=r_{n/2-1}-\frac{1}{2}$ or $r_{\floor}=r_{\floor-1}-1.$  We solve for $r_{k+1}$ by plugging in the values for $r_{\floor}$ and $r_{\floor-1}.$  Doing so in either the even or the odd case gives us $r_{k+1}={\genfrac(){0cm}{0}{k+1} 2}-\frac{n-1}{2}.$  This in turn implies that for $i\geq k+1$ we have $$r_i={\genfrac(){0cm}{0}{i-k} 2}+(i-k)({\genfrac(){0cm}{0}{k+1} 2}-\frac{n-1}{2})-(i-k-1){\genfrac(){0cm}{0}{k} 2}={\genfrac(){0cm}{0}{i} 2}-(i-k)\left(\frac{n-1}{2}\right).$$
\end{proof}
We can now explicitly compute the vertices of the F-simplex which are F-nef.

\begin{theorem}[Version 2]
$F_k$ is the face of the F-simplex spanned by $p_{k},\dots p_{\floor}$ whenever $k\geq\frac{n}{3}$.  This bound is tight.
\end{theorem}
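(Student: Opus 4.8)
The plan is to reduce the statement to a numerical check on the vertices $p_k,\dots,p_{\floor}$ and then apply the Keel--McKernan criterion (Theorem~\ref{number}) to the explicit divisors for the $p_k$ computed just above. As already observed (using Proposition~\ref{face}), $F_k = V_1\cap\dots\cap V_{k-2}\cap F$, and the vertices of the F-simplex lying on $V_1\cap\dots\cap V_{k-2}$ are exactly $p_k,\dots,p_{\floor}$; so if $\Delta_k$ denotes their convex hull (a face of the F-simplex), then $F_k=\Delta_k\cap F\subseteq\Delta_k$ for every $k$. Since $F$ is a convex cone and $\Delta_k=\mathrm{conv}(p_k,\dots,p_{\floor})$, we get $F_k=\Delta_k$ \emph{if and only if} each $p_j$ with $k\le j\le\floor$ is F-nef. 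So everything comes down to pinning down exactly which vertices $p_j$ are F-nef, and showing the answer is $j\ge n/3$.

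Put $\psi(i)=\tfrac{i(i-1)}{n-1}$. By the preceding lemma the coefficient of $D_i$ in $p_j$ is $c_j(i)=\psi(i)+\ell_j(i)$, where $\ell_j$ is the concave piecewise-linear function on $[0,n]$ equal to $0$ on $[0,j]$, of slope $-1$ on $[j,n-j]$ and of slope $-2$ on $[n-j,n]$ (so that $\psi+\ell_j$ is symmetric about $n/2$, as it must be). By Theorem~\ref{number}, $p_j$ is F-nef iff
\[
E(a,b,c,d):=c_j(a{+}b)+c_j(a{+}c)+c_j(a{+}d)-c_j(a)-c_j(b)-c_j(c)-c_j(d)\;\ge\;0
\]
for all $a+b+c+d=n$ with $a\le b\le c\le d$. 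A one-line expansion shows that the contribution of the quadratic part $\psi$ to $E$ is exactly $2a$, so $E=2a+L$, where $L$ is the analogous alternating sum formed from $\ell_j$. Moreover $a\le b\le (n-1)/3$, so as soon as $j\ge n/3$ we automatically have $\ell_j(a)=\ell_j(b)=0$.

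For sharpness (that $n/3$ cannot be lowered) I would test $E$ on the balanced partition of $n$, the one whose four parts are all $\lfloor n/4\rfloor$ or $\lceil n/4\rceil$. When $j\ge\lceil n/4\rceil$ every part is $\le j$ while each of the three pair-sums lies strictly between $j$ and $n-j$; then $L$ telescopes to $3j-(2a+n)$, so $E=3j-n$, which is negative exactly when $j<n/3$. For the finitely many pairs $(n,j)$ with $2\le j<\lceil n/4\rceil$ the same partition, computed by hand, still gives a negative value of $E$ (indeed $E=-j$ when all four parts exceed $j$). Hence $p_j$ fails to be F-nef for every $j$ with $2\le j<n/3$, and therefore $F_k\subsetneq\Delta_k$ for those $k$: this is exactly the asserted tightness.

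The remaining, and principal, task is to prove $E\ge0$ for all partitions once $j\ge\lceil n/3\rceil$, and this case analysis is the main obstacle. I would split into cases according to where $c$, $d$, $a{+}b$, $a{+}c$, $a{+}d$ fall relative to the two breakpoints $j$ and $n-j$ of $\ell_j$, exactly as in the case analysis used to prove Proposition~\ref{main}. Two observations make it work: (i) $\ell_j$ decreases by at most $2$ per unit step, and by at most $1$ per unit step away from $[n-j,n]$, so $\ell_j(x{+}a)-\ell_j(x)\ge-2a$ always and $\ge-a$ whenever $[x,x{+}a]\subseteq[0,n-j]$; and (ii) the relation $a+b+c+d=n$ couples the condition $d\ge n-j$ to $a+b+c\le j$, which is precisely what eliminates the only configuration in which the slope-$(-2)$ segment of $\ell_j$ could overwhelm the $+2a$ coming from $\psi$. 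In each case the inequality reduces to $3j-n\ge0$ or to something strictly larger, with the balanced partition the binding instance; parts equal to $1$ require no special care because $\psi(1)=\ell_j(1)=0$ is consistent with the convention $r_1=0$.
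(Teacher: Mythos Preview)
Your reduction is exactly the paper's: $F_k=\Delta_k$ iff every vertex $p_m$ with $k\le m\le\lfloor n/2\rfloor$ is F-nef, so the whole theorem comes down to deciding for which $m$ the vertex $p_m$ is F-nef. Your decomposition $c_m=\psi+\ell_m$ with the $\psi$-part contributing exactly $2a$ is a clean repackaging of the paper's use of the second expression $p_m=-K+\sum_{i\le m}(i-2)D_i+\sum_{i>m}(m-2)D_i$, and your tightness test via the balanced partition is a valid alternative to the paper's test curve $p+l+l+l=n$ (your ``finitely many pairs $(n,j)$'' is a slip---there are infinitely many; but your own computation $E=-j$ when all four parts exceed $j$, together with $E=3j-n$ when they are all $\le j$, already handles every $j<n/3$ uniformly, so no hand check is needed).

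The genuine gap is in the forward direction. You assert that the case split on where $c,d,a{+}b,a{+}c,a{+}d$ sit relative to $j$ and $n{-}j$ goes through using observations (i) and (ii), but those observations alone do not give $E\ge0$: the crude bound from (i) only yields $L\ge-3a$ (hence $E\ge -a$) in the generic situation, and (ii) merely identifies which single configuration touches the slope $-2$ zone. One still has to do the bookkeeping. The paper carries it out, and in fact much more simply than your proposed many-case split: using the form $p_m=-K+G$ with $G$-coefficient $\min(i,m)-2$, only \emph{three} cases are needed, keyed on $a{+}b\ge m$; $a{+}d\le m$; and $a{+}b<m<a{+}d$. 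In each the intersection number reduces to an expression like $3m-n$, $2a$, or $\min(a{+}c,m)+\min(b{+}c,m)-\min(c,m)-\min(d,m,a{+}b{+}c)$, all visibly nonnegative when $m\ge n/3$. So your plan is right, but to finish you should replace the heuristic appeal to (i)--(ii) by this short explicit case analysis.
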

\begin{proof}
As usual we will look at curves $C$ corresponding to a partition $a+b+c+d=n$ with $a\leq b\leq c\leq d.$\\\\
\bold{case 1)} $a+b \geq k$.\\
In this case we have $p_{k}\text{\Huge.\normalsize}C =$
$$-2 + (k-2) + (k-2)+(k-2)-\min(a-2,k-2) -\min(b-2,k-2) -\min(c-2,k-2) -\min(d-2,k-2).$$
Since $k\geq n/3$ this implies $a,b\leq k$, so in fact (canceling
the two's) we have
$$p_{k}\text{\Huge.\normalsize}C = 3k -a -b -\min(c,k) -\min(d,k)\geq 3k-a-b-c-d=3k-n \geq 0.$$\\\\
 \bold{case 2)} $a+d\leq k$.\\
by canceling terms we get $$p_{k}\text{\Huge.\normalsize}C =(a+b)+(a+c)+(a+d)-a-b-c-d=2a>0.$$
 \bold{case 3)} $a+b<k, a+d> k$.\\
 $$p_{k}\text{\Huge.\normalsize}C =
 (a+b)+\min(a+c,k)+\min(k,b+c)-a-b-\min(c,k)-\min(d,k,a+b+c)$$$$=\min(a+c,k)+\min(k,b+c)-\min(c,k)-\min(d,k,a+b+c)\geq 0$$

These three cases are all-inclusive so we have that $p_{k}$ is F-nef
whenever $k\geq\frac{n}{3}$. For tightness, we refer to the
following example.
\end{proof}
\begin{example}
Let $n=3*l+p$ where $p\in\{1,2,3\} ,l\geq 2$.  We will look at
$p_{\floor-k}$ where $k=l$.  Let $C$ be represented by $p+l+l+l=n$.
Then $_{\floor-k}\text{\Huge.\normalsize}C = -1+3(l-2)-3(k-2)=-1<0$.
Thus $p_{\floor-k}$ is not F-nef is this case.

\end{example}

\begin{conjecture}[Weak Fulton]
The face of the $\mathfrak{s}_n$-equivariant nef cone containing $A_{\alpha}$ for $\alpha\leq\frac{2}{[n/3]+1}$ is a simplex generated  by $p_{[n/3]},\dots p_{\floor}$.

\end{conjecture}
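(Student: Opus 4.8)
The plan is to deduce the conjecture from the material already assembled in this section, reducing it to a single concrete assertion about the vertices $p_k$ of the F-simplex.

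First I would package Proposition~\ref{face} with the second (``Version~2'') theorem above: for $k\geq[n/3]$ the minimal face $F_k=V_1\cap\cdots\cap V_{k-2}\cap F$ of the Fulton cone $F$ containing $A_{2/(k+1)}$ is precisely the simplicial face of the F-simplex spanned by $p_k,\dots,p_{\floor}$, of projective dimension $\floor-k$, and it contains $A_\alpha$ for every $\alpha\leq 2/(k+1)$ (with $A_\alpha$ in its relative interior exactly when $\alpha\in(2/(k+2),2/(k+1)]$, and in a proper subface, again a simplex on a subset of the $p_j$, for smaller $\alpha$). Each $V_i$ is the vanishing hyperplane of the class of the vital curve $1+1+i+(n-i-2)$, which is nonnegative on every F-nef divisor, so each $V_i$ supports $F$; hence $F_{[n/3]}=V_1\cap\cdots\cap V_{[n/3]-2}\cap F$ is an \emph{exposed} face of $F$ (an intersection of finitely many exposed faces, realized by the sum of the corresponding functionals). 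Now I would invoke the elementary fact that if $N\subseteq F$ are closed convex cones and $G=F\cap H$ is an exposed face of $F$ with $G\subseteq N$, then $H$ supports $N$ as well and $N\cap H=N\cap F\cap H=G$, so $G$ is an exposed face of $N$. Taking $N$ to be the $\mathfrak{s}_n$-equivariant nef cone and $G=F_{[n/3]}$, this shows that the conjecture is \emph{equivalent} to the statement that every vertex $p_k$ with $[n/3]\leq k\leq\floor$ is nef (the converse holds because the vertices of a face of $N$ are themselves nef).

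Several of these vertices are already within reach. For $k=\floor$ the Lemma realizes $p_{\floor}$ as a positive multiple of $A_{2/(\floor+1)}$, which is semiample, hence nef, by Theorem~\ref{nef}. For the small vertices, note that $p_k$ is F-nef for all $k\geq[n/3]$ by the Version~2 theorem, so the Keel--McKernan criterion applies whenever a positive multiple can be written as $cp_k=K_{\mo}+G$ with $G$ effective and $D-G$ effective (all coefficients of $G$ in $[0,1]$): then the corollary in Section~5 (F-nef together with that $K+G$ presentation implies nef) upgrades F-nefness to nefness. Using the explicit coefficients of $p_k$ from the Lemma, this presentation is available for $k\leq 5$, by the same arithmetic that handles $A_\alpha$ for $\alpha\geq 1/3$ there. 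Finally, for $n\leq 24$ the $\mathfrak{s}_n$-equivariant Fulton conjecture is a theorem of Farkas and Gibney, so F-nef implies nef for all the $p_k$ at once, and the conjecture holds unconditionally in that range.

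What remains, and this is the main obstacle, is $n\geq 25$ together with the ``deep'' vertices $p_k$ with $[n/3]\leq k\leq\floor-1$ (automatically $k\geq 6$ there). Here the Keel--McKernan approach provably stalls: forcing every coefficient of $cp_k-K_{\mo}$ into $[0,1]$ makes the low-index boundary divisors demand a small value of $c$ and the high-index ones a large value of $c$, and these constraints are incompatible once $k\geq 6$, exactly the obstruction noted in Section~5 that blocks an unconditional proof of nefness of $A_\alpha$ for $\alpha<1/3$. A genuinely new ingredient is needed, and the natural candidate is geometric: construct a birational morphism $\phi_k\colon\mo\to Z_k$ that contracts precisely the vital curves on which $p_k$ has degree zero — by the case analysis in the Version~2 proof, the partitions $a+b+c+d=n$ forcing equality there — and satisfies $p_k=\phi_k^{*}(\text{ample})$. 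One then has to identify the target $Z_k$, presumably a further Hassett-type contraction of $\mo$ or a GIT quotient of $(\IP^1)^n$ with a suitably chosen (a priori non-symmetric) polarization, and verify that $\phi_k$ exists with the predicted exceptional locus; this identification is where I expect the real difficulty to lie, since producing such a $\phi_k$ unconditionally amounts to a concrete case of Fulton's conjecture for $\mo$. Short of that, invoking the full equivariant Fulton conjecture closes the argument immediately, consistently with the conditional character of the rest of the paper.
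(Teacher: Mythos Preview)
The statement you are attempting is labeled a \emph{conjecture} in the paper and is left open there; the paper supplies no proof, only the remark afterward that it would yield a strong partial result toward the log canonical model description. So there is no ``paper's own proof'' to compare against, and your proposal is, appropriately, not a proof either but a reduction together with some partial cases and an explicit acknowledgment of the residual gap.

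Your reduction is correct and is exactly the right way to read the Version~2 theorem: since the nef cone $N$ sits inside the Fulton cone $F$ and each $V_i$ is a supporting hyperplane of $F$ (hence of $N$), the face $F_{[n/3]}=V_1\cap\cdots\cap V_{[n/3]-2}\cap F$, already identified with the simplex on $p_{[n/3]},\dots,p_{\floor}$, is a face of $N$ if and only if it lies in $N$, i.e., if and only if every vertex $p_k$ is nef. Proposition~\ref{face} then pins down minimality, since $A_{2/([n/3]+1)}$ sits in the relative interior of that simplex. Your identification of $p_{\floor}$ with a positive multiple of $A_{2/(\floor+1)}$, nef by Theorem~\ref{nef}, is also correct.

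Two small points. First, the Keel--McKernan argument you invoke for $k\leq 5$ is idle in the regime where the conjecture is actually open: for $n\geq 25$ one has $[n/3]\geq 9$, so none of the vertices in question satisfy $k\leq 5$, and the only unconditionally nef vertex you produce for large $n$ is $p_{\floor}$. Second, the ``elementary fact'' about exposed faces already presupposes $G\subseteq N$, which is precisely the nefness of the $p_k$; you use it correctly as one direction of the equivalence, but the wording momentarily suggests it is doing independent work. The upshot matches the paper's stance: the conjecture is open, and what is missing is exactly nefness of the deep vertices $p_k$ for $[n/3]\leq k<\floor$ when $n$ is large, which is a genuine instance of the equivariant Fulton conjecture not covered by the available tools.
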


\begin{remark}
The weak Fulton conjecture would not by itself prove our description of the log canonical models of $\mo$.  It would however give us a strong partial result.

\end{remark}
\section{References}
[BCHM] C. Birkar, P. Cascini, C. Hacon, J. McKernan, \emph{Existence
of minimal models for varieties of log general type},
math.AG/0610203\newline\newline
[CoHa] M. Cornalba, J. Harris, \emph{Divisor classes associated to families of stable varieties, with applications to the moduli space of curves},  Ann. Sci. �cole Norm. Sup. (4)  21  (1988),  no. 3, 455--475.\newline\newline
[CHS] I. Coskun,, J. Starr, J. Harris, \emph{The ample cone of the Kontsevich moduli space}, Canad. J. Math., to
 appear.\newline\newline
 [EH] D. Eisenbud J. Harris, \emph{The Kodaira dimension of
the moduli space of curves of genus $\geq 23$}, Invent. Math.  90
(1987), no. 2, 359-387.\newline
\newline
 [Fa] G. Farkas, \emph{Syzygies of curves
and the effective cone of $\bar{M}_g$}, Duke Math. J.  135  (2006),
no. 1, 53--98. \newline
\newline
[FaGi] G. Farkas, A. Gibney, \emph{The Mori cones of moduli spaces
of pointed curves of small genus}, Trans. Amer. Math. Soc.  355
(2003),  no. 3, 1183--1199.\newline\newline
 [Gi] A. Gibney, \emph{Numerical criteria for divisors on $M_{g}$ to be ample},
 math.AG/0312072.\newline\newline
  [GKM] A. Gibney, S. Keel, I. Morrison,
\emph{Towards the ample cone of $\overline M\sb {g,n}$,} J. Amer.
Math. Soc. 15 (2002), no. 2, 273--294.\newline\newline
 [HaMu] J.
Harris, D. Mumford, \emph{On the Kodaira dimension of the moduli
space of curves}, Invent. Math. 67 (1982), no. 1,
23-88.\newline\newline
 [Ha] B. Hassett, \emph{Moduli spaces of
weighted pointed stable curves}, Adv. Math.  173  (2003),  no. 2,
316--352.\newline\newline
[HaMo] J. Harris, I. Morrison,
\emph{Slopes of effective divisors on the moduli space of stable
curves},  Invent. Math.  99  (1990),  no. 2,
321--355.\newline\newline
 [HH] B. Hassett, D. Hyeon,
\emph{Log canonical models for the moduli space of curves: First
divisorial contraction},math.AG/0607477v1.\newline\newline
[Ke] S. Keel,\emph{ Intersection theory of
moduli space of stable $n$-pointed curves of genus zero}.  Trans.
Amer. Math. Soc.  330  (1992),  no. 2, 545--574.\newline\newline
 [KeM] S. Keel, J. McKernan, \emph{Contractible Extremal Rays on
 $\overline{M}_{0,n}$}, math.AG/9607009.\newline\newline
 [KM] J. Kollar, S. Mori. \emph{Birational geometry of algebraic varieties}, Cambridge Tracts in Mathematics, 134. Cambridge University Press, Cambridge, 1998.\newline\newline
 [Pan] R. Pandharipande,
\emph{The canonical class of $\overline{M}\sb {0,n}(\bold P\sp r,d)$ and enumerative geometry},
Internat. Math. Res. Notices (1997), no. 4, 173--186.
\end{document}